\newtheorem{theorem}{Theorem}[section]
\newtheorem{thm}[theorem]{Theorem}
\newtheorem{prop}[theorem]{Proposition}
\newtheorem{lem}[theorem]{Lemma}
\newtheorem{cor}[theorem]{Corollary}
\newtheorem{conj}[theorem]{Conjecture}
\makeatletter \@addtoreset{equation}{section}
\newcommand{\qbinom}[2]{\genfrac{[}{]}{0pt}{}{#1}{#2}}
\newcommand{\lrq}[3]{\left(\frac{#1}{#2}#3\right)}
\newcommand{\Mid}{\:|\:}  
\DeclareMathOperator*{\CT}{CT}
\newcommand{\CC}{\mathbb{C}}
\begin{document}

\title{On the $q$-Dyson orthogonality problem}

\author{Yue Zhou}

\address{School of Mathematics and Statistics, Central South University,
Changsha 410075, P.R. China}

\email{zhouyue@csu.edu.cn}

\subjclass[2010]{05A30, 33D70, 05E05}


\begin{abstract}
By combining the Gessel--Xin method with plethystic substitutions,
we obtain a recursion for a symmetric function generalization of the
$q$-Dyson constant term identity also known as
the Zeilberger--Bressoud $q$-Dyson theorem.
This yields a constant term identity which generalizes the non-zero part
of Kadell's orthogonality ex-conjecture and a result
of K\'{a}rolyi, Lascoux and Warnaar.

\noindent
\textbf{Keywords:}
$q$-Dyson constant term identity, Kadell's orthogonality conjecture, symmetric function
\end{abstract}

\maketitle

\section{Introduction}

The study of constant term identities can be traced back to a 1962 paper on random matrices and the theory of statistical levels of complex systems by Freeman Dyson~\cite{dyson}.
In the course of this work he conjectured that for non-negative integers $a_0,\dots,a_n$,
\begin{equation}\label{thm-Dyson}
\CT_{x} \,\prod_{0\leq i\neq j\leq n}(1-x_i/x_j)^{a_i}=\frac{(a_0+\cdots+a_n)!}{a_0!\cdots a_n!},
\end{equation}
where $\displaystyle\CT_x$ denotes taking the constant term with respect to $x:=(x_0,\dots,x_n)$.
Dyson's conjecture was soon proved by Gunson \cite{gunson} and Wilson \cite{wilson}.
Subsequently, an elegant proof using Lagrange interpolation was given by Good \cite{good},
and much later, Zeilberger gave a combinatorial
proof using tournaments \cite{zeil}.
These days Dyson's ex-conjecture is usually referred as the Dyson constant term identity.

In this introduction,
we first briefly review the history of the Dyson constant term identity and some of its generalizations. Then we state our main result, a symmetric function generalization of the Dyson constant term identity, related to Kadell's orthogonality (ex-)conjecture.
We conclude the introduction by outlining the main
new ideas used in this paper.

In 1975 Andrews \cite{andrews1975} conjectured the following $q$-analogue of \eqref{thm-Dyson}:
\begin{equation}\label{q-Dyson}
\CT_x \,
\prod_{0\leq i<j\leq n}
(x_i/x_j;q)_{a_i}(qx_j/x_i;q)_{a_j}=
\frac{(q;q)_{a_0+\cdots+a_n}}{(q;q)_{a_0}(q;q)_{a_1}\cdots(q;q)_{a_n}},
\end{equation}
where
$(z;q)_k:=(1-z)(1-zq)\dots(1-zq^{k-1})$ is a
$q$-shifted factorial.
Andrews' $q$-Dyson conjecture was first proved in 1985 by Zeilberger and Bressoud \cite{zeil-bres1985}, who generalized Zeilberger's method of tournaments mentioned above.
Twenty years later Gessel and Xin \cite{gess-xin2006} gave a second proof using formal Laurent series, and
then, in 2014, K\'{a}rolyi and Nagy \cite{KN} discovered a very short and elegant proof using multivariable Lagrange interpolation.
Finally, an inductive proof was found by Cai \cite{cai} by adding additional parameters to the problem.

In 1982, Macdonald realised that the equal parameter case of the $q$-Dyson identity, i.e., $a_0=a_1=\cdots=a_n=k$, can be formulated as a combinatorial
identity for the root system $\mathrm{A}_n$.
This led him to conjecture a constant term identity for arbitrary root systems \cite{macdonald82}:
\[
\CT \,\prod_{\alpha\in R^+}(e^{-\alpha};q)_k(qe^{\alpha};q)_k=
\prod_{i=1}^r\qbinom{d_ik}{k}.
\]
Here $R$ is a reduced irreducible finite root system of rank $r$, $R^+$ is the set of positive roots,
$d_1,\dots,d_r$ are the degrees of the fundamental invariants,
and $\qbinom{n}{k}$ is a $q$-binomial coefficient. 
Initially, many cases of Macdonald's conjecture were proven on a case by case basis \cite{Askey,GG,Hab,kadell1,zeil-bres1985}.
A uniform proof for $q=1$ was first found by Opdam \cite{opdam} using hypergeometric shift operators.
Eventually, a case-free proof of the full conjecture
was given by Cherednik \cite{cherednik} based on his double affine Hecke algebra.
For more on the extensive literature of Macdonald's constant term conjecture we refer the reader to \cite{FW} and  references therein.

Let $\lambda=(\lambda_0,\dots,\lambda_n)$ be a partition.
In 1988, Macdonald \cite{MacSMC,Mac95} introduced a family of symmetric functions $
P_{\lambda}(q,t)=P_{\lambda}(x_0,\dots,x_n;q,t)$, now called Macdonald polynomials. Given the scalar product on the ring of symmetric functions
in $x_0,\dots,x_n$
\[
\langle f,g \rangle_{q,q^k}:=\frac{1}{(n+1)!}\CT_x
f(x_0,\dots,x_n) g(x_0^{-1},\dots,x_n^{-1}) \prod_{0\leq i\neq j\leq n}(x_i/x_j;q)_k,
\]
Macdonald established the orthogonality
\[
\big\langle P_{\lambda}(q,q^k), P_{\mu}(q,q^k)
\big\rangle_{q,q^k}=0
\quad\text{if $\lambda\neq\mu$},
\]
for $k$ a positive integer.
Moreover, he showed that the quadratic norm
evaluation is given by \cite[page 373]{Mac95}
\[
\big\langle P_{\lambda}(q,q^k), P_{\lambda}(q,q^k)
\big\rangle=\prod_{0\leq i<j\leq n}\frac{(q^{\lambda_i-\lambda_j+1+(j-i)k};q)_{k-1}}
{(q^{\lambda_i-\lambda_j+1+(j-i-1)k};q)_{k-1}}.
\]
For $\lambda=0$ the Macdonald polynomials trivialise
to $1$, so that
\[
\big\langle 1,1\big\rangle_{q,q^k}=
\prod_{i=1}^{n+1}\qbinom{ik-1}{k-1}.
\]
By a simple transformation, it is not difficult to
show that this is equivalent to the
equal parameter case of \eqref{q-Dyson}. This
provides a satisfactory explanation for the
$a_0=a_1=\dots=a_n=k$ case of the $q$-Dyson
constant term identity in terms of orthogonal polynomials.
Finding a similar such explanation for the full
$q$-Dyson identity is an important open problem.

The first step towards a resolution of this
problem was made by Kadell \cite{kadell}, who
formulated an orthogonality conjecture which we will
describe next.
Let $X=(x_0,x_1,\dots)$ be an alphabet of countably many variables.
Then the $r$th complete symmetric function $h_r(X)$
may be defined in terms of its generating function as
\begin{equation}\label{e-gfcomplete}
\sum_{r\geq 0} z^r h_r(X)=\prod_{i\geq 0}
\frac{1}{1-zx_i}.
\end{equation}
More generally, for the complete symmetric function indexed by a composition (or partition)
$v=(v_0,v_1,\dots,v_k)$
\[
h_v:=h_{v_0}\cdots h_{v_k}.
\]
For $a:=(a_0,a_1,\dots,a_n)$ a sequence of non-negative integers, let $x^{(a)}$ denote
the alphabet
\begin{equation}\label{alphabet-x}
x^{(a)}=(x_0,x_0q,\dots,x_0q^{a_0-1},\dots,x_n,x_nq,\dots,x_nq^{a_n-1})
\end{equation}
of cardinality $|a|:=a_0+\cdots+a_n$, and define
the generalized $q$-Dyson constant term
\begin{equation}\label{D-Kadell}
D_{v,\lambda}(a)=\CT_x
x^{-v}h_{\lambda}\big(x^{(a)}\big)
\prod_{0\leq i<j\leq n}
(x_i/x_j;q)_{a_i}(qx_j/x_i;q)_{a_j}.
\end{equation}
Here $v=(v_0,\dots,v_n)\in\mathbb{Z}^{n+1}$,
$x^v$ denotes the monomial $x_0^{v_0}\cdots x_n^{v_n}$
and $\lambda$ is a partition such that
$|v|=|\lambda|$.
(Note that if $|v|\neq|\lambda|$ then
$D_{v,\lambda}(a)=0$.)
For the constant term \eqref{D-Kadell},
Kadell formulated the
following conjecture \cite[Conjecture 4]{kadell}.
\begin{conj}\label{conj-Kadell}
For $r$ a positive integer and $v$ a composition such that $|v|=r$,
\begin{equation}\label{kadellconj}
D_{v,(r)}(a)=
\begin{cases}
\displaystyle
\frac{q^{\sum_{i=k+1}^n a_i}(1-q^{a_k})(q^{|a|};q)_r}{(1-q^{|a|})(q^{|a|-a_k+1};q)_r}
\prod_{i=0}^n\qbinom{a_i+\cdots+a_n}{a_i}
& \text{if $v=(0^{k},r,0^{n-k})$}, \\[6mm]
0 & \text{otherwise}.
\end{cases}
\end{equation}
\end{conj}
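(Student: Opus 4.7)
The plan is to combine the Gessel--Xin partial-fraction technique with a plethystic rewriting that absorbs the symmetric function $h_r(x^{(a)})$ into an auxiliary rational factor. By the generating function \eqref{e-gfcomplete} applied to the alphabet $x^{(a)}$,
\[
\sum_{r\geq 0}z^r h_r(x^{(a)})=\prod_{i=0}^n\frac{1}{(zx_i;q)_{a_i}},
\]
so multiplying \eqref{D-Kadell} by $z^r$ and summing over $r$ packages $\{D_{v,(r)}(a)\}_{r\geq 0}$ as the coefficient of $z^r$ in the single constant term
\[
G_v(z;a)=\CT_x x^{-v}\prod_{i=0}^n\frac{1}{(zx_i;q)_{a_i}}
\prod_{0\leq i<j\leq n}(x_i/x_j;q)_{a_i}(qx_j/x_i;q)_{a_j}.
\]
In this form the $q$-Dyson product has been enlarged by an extra factor that behaves like a ``ghost variable'' at $1/z$, and I expect the Gessel--Xin machinery to apply uniformly to $G_v(z;a)$.

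For the non-vanishing case $v=(0^k,r,0^{n-k})$, the next step is to apply the Gessel--Xin partial-fraction decomposition in $x_k$, the only coordinate carrying a negative power of $x$. The integrand has two families of relevant poles in $x_k$: the familiar $q$-Dyson poles at $x_k=q^j x_i$ with $i\neq k$, and a new family at $x_k=q^{-j}/z$ coming from $(zx_k;q)_{a_k}^{-1}$. Summing residues and extracting $[z^r]$, a homogeneity count in $z$ should kill almost all of the $z$-dependent residues, leaving a Gessel--Xin-style recursion that expresses $D_{v,(r)}(a)$ in terms of constant terms in the remaining $n$ variables. The explicit right-hand side of \eqref{kadellconj} is then verified to satisfy this recursion by induction on $n$, with the base case $n=0$ reducing to the $q$-binomial theorem. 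For the vanishing case---$v$ with two or more nonzero entries---the same partial-fraction analysis applied to a coordinate $x_i$ with $v_i=0$ should eliminate every surviving residue by a degree obstruction, since the remaining monomial support of $v$ cannot be produced from any admissible residue.

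The main obstacle will be the partial-fraction step itself. Adjoining $\prod_i(zx_i;q)_{a_i}^{-1}$ enlarges the pole set of the $q$-Dyson product in a way that breaks the clean ``tournament'' bookkeeping of Gessel--Xin: each variable now carries both the old poles and a new $z$-dependent pole, and one must carefully track how partial-fraction residues in successive variables interact with the extraction of $[z^r]$. I expect the plethystic viewpoint---writing $x^{(a)}$ as the alphabet $\sum_i x_i(1-q^{a_i})/(1-q)$---to be the cleanest organizational device, and I suspect that closing the recursion will require passing to a strictly larger family of constant terms than \eqref{D-Kadell} alone, which is presumably the symmetric function generalization promised in the abstract.
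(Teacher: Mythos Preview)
Your proposal is a sketch rather than a proof, and it contains a structural gap that would prevent the Gessel--Xin machinery from running as you describe. You write that in $G_v(z;a)$ the integrand has ``the familiar $q$-Dyson poles at $x_k=q^j x_i$'', but the $q$-Dyson product $\prod_{i<j}(x_i/x_j)_{a_i}(qx_j/x_i)_{a_j}$ is a Laurent \emph{polynomial}: it has no such poles. The only poles in $x_k$ in your $G_v(z;a)$ are the $a_k$ simple poles coming from $(zx_k;q)_{a_k}^{-1}$. Meanwhile the numerator in $x_k$ has top degree $na_k-r$ (from the $q$-Dyson product together with $x_k^{-r}$), which for generic parameters is $\geq a_k$, so the hypothesis of Lemma~\ref{lem-almostprop} fails and your partial-fraction step cannot be carried out. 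The subsequent ``homogeneity count in $z$'' and the vanishing argument for compositions $v$ with two or more nonzero parts are both left at the level of expectation, with no mechanism specified.

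The paper takes a different route that avoids this obstacle. Rather than packaging $h_r$ into a generating function in $z$, it keeps $h_{v^+}$ as a symmetric function and instead manufactures denominators by viewing $D_{v,v^+}(a,m)$ as a polynomial in $q^{a_0}$ (Proposition~\ref{cor-poly}) and evaluating at \emph{negative} integers $a_0=-d$. Then $(x_0/x_i)_{a_0}$ becomes $1/(q^{-d}x_0/x_i)_d$, producing $nd$ poles in $x_0$, which is enough for Lemma~\ref{lem-almostprop}. The symmetric function is handled plethystically: after the Gessel--Xin substitutions, Proposition~\ref{lem-subs} shows that the alphabet $x_m^{(a)}$ collapses to a signed alphabet of cardinality exactly $b=d-\sum a_{u_i}$, and the key vanishing then comes from the elementary fact $e_r[X]=0$ for $|X|<r$ (Proposition~\ref{prop-hr}). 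This yields all the roots; one further evaluation at $a_0=1$ (Lemma~\ref{lem-iterate}) pins down the polynomial and gives the recursion of Theorem~\ref{thm-2}, which specializes to Kadell's formula. Your intuition that one must pass to a strictly larger family of constant terms is correct---the paper introduces the extra parameter $m$ in $D_{v,\lambda}(a,m)$---but the enlargement is in a different direction from your $z$-variable.
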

In fact Kadell only considered $v=(r,0^n)$ in his conjecture, but the
more general statement given above is what was proved by K\'{a}rolyi,
Lascoux and Warnaar in \cite[Theorem 1.3]{KLW} using multivariable Lagrange
interpolation and key polynomials.
If for a sequence $u=(u_0,\dots,u_n)$ of integers we denote
by $u^{+}$ the sequence obtained from $u$ by ordering the $u_i$
in weakly decreasing order (so that $u^{+}$ is a partition
if $u$ is a composition), then K\'{a}rolyi et al.\ also
proved a closed-form expression for
$D_{v,v^{+}}(a)$ in the case when $v$ is a composition all of whose parts
have multiplicity one, i.e., $v_i\neq v_j$ for all $0\leq i<j\leq n$.
Subsequently, Cai \cite{cai} gave an inductive proof of Kadell's conjecture.
He also showed that the following more general
orthogonality holds.
\begin{thm}\label{thm-Cai}
Let $v\in\mathbb{Z}^{n+1}$ and $\lambda$
a partition such that $|v|=|\lambda|$.
If $D_{v,\lambda}(a)$ is non-vanishing, then
$v^{+}\geq \lambda$ in dominance order.
\end{thm}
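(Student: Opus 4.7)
The plan is to prove the contrapositive—if $v^+\not\geq \lambda$ in dominance order, then $D_{v,\lambda}(a)=0$—by induction on $n$. The key tools are a Gessel-Xin-style factorization of the Dyson product and a plethystic expansion of $h_\lambda(x^{(a)})$, both arranged to peel off the last variable $x_n$.

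Split the alphabet as $x^{(a)} = x^{(a')}\cup X_n$ with $a':=(a_0,\dots,a_{n-1})$ and $X_n:=(x_n, x_nq,\dots, x_nq^{a_n-1})$. Applying the addition formula $h_r(A\cup B)=\sum_j h_j(A)h_{r-j}(B)$ to each factor of $h_\lambda=\prod_k h_{\lambda_k}$, together with the evaluation $h_r(X_n)=x_n^r\qbinom{a_n+r-1}{r}$, yields
\[
h_\lambda(x^{(a)}) = \sum_{b:\,0\leq b_k\leq \lambda_k}\Big(\prod_k \qbinom{a_n+\lambda_k-b_k-1}{\lambda_k-b_k}\Big)\,x_n^{|\lambda|-|b|}\,h_b(x^{(a')}).
\]
In parallel, factor the Dyson product as $F_n(x)\cdot D'(x)$, with $F_n:=\prod_{i<n}(x_i/x_n;q)_{a_i}(qx_n/x_i;q)_{a_n}$ holding all couplings of $x_n$ with the other variables and $D'$ the Dyson product in $(x_0,\dots,x_{n-1})$ at weights $a'$. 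Expanding $F_n=\sum_{\alpha,\beta}c_{\alpha,\beta}(q)\prod_{i<n}x_i^{\alpha_i-\beta_i}\,x_n^{\sum_i(\beta_i-\alpha_i)}$ with $0\leq \alpha_i\leq a_i$ and $0\leq \beta_i\leq a_n$, and extracting $\CT_{x_n}$ first, produces the recursion
\[
D_{v,\lambda}(a)=\sum_{\substack{b,\alpha,\beta\\ \sum_i(\beta_i-\alpha_i)=v_n-|\lambda|+|b|}}C_{b,\alpha,\beta}(q)\,D_{w,b^+}(a'),
\]
where $w_i:=v_i-\alpha_i+\beta_i$ for $i<n$ and $b^+$ denotes the partition rearrangement of $b$. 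By the inductive hypothesis, each nonvanishing term on the right forces $w^+\geq b^+$ in dominance on $\mathbb{Z}^n$.

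The main obstacle is the final combinatorial step: given that some surviving term exists with $w^+\geq b^+$, $b\leq \lambda$ componentwise, and the support bounds $0\leq\alpha_i\leq a_i$, $0\leq\beta_i\leq a_n$, deduce $v^+\geq \lambda$ in $\mathbb{Z}^{n+1}$. The difficulty is that the shifts $w_i-v_i=\beta_i-\alpha_i$ can be negative, so simply lifting a witness subset $T\subseteq\{0,\dots,n-1\}$ realizing the $w$-dominance maximum to a subset of $\{0,\dots,n\}$ does not directly produce the needed bound on $v$. I would attack this either by refining the inductive hypothesis to track which subsets attain the dominance maximum, or by using the balance $\sum_i(\beta_i-\alpha_i)=v_n-|\lambda|+|b|$ together with $b_k\leq \lambda_k$ to construct, for each $k<n$, a subset of $\{0,\dots,n\}$ of size $k+1$ (possibly including $n$) whose $v$-sum meets $\lambda_0+\cdots+\lambda_k$. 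The base case $n=0$ is immediate, since then $D(x;a)=1$ and $h_\lambda(x^{(a)})$ collapses to the single monomial $x_0^{|\lambda|}\prod_k\qbinom{a_0+\lambda_k-1}{\lambda_k}$, forcing $v_0=|\lambda|$ and hence $v^+\geq \lambda$.
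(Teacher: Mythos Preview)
First, note that the paper does not give its own proof of this theorem. Theorem~\ref{thm-Cai} is attributed to Cai and used as a black box (see also Proposition~\ref{prop-Cai} and the sentence following it: ``Note that we have a way to avoid using Proposition~\ref{prop-Cai} hinted by Cai's result in this paper, but the method is too complicated to present here''). The paper's own machinery only yields the weaker statement recorded in \eqref{e-generalorth}, and for the full assertion it explicitly defers to Cai.

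Second, your proposed argument has a genuine gap, and the gap is not merely technical: the ``final combinatorial step'' you flag as the main obstacle is in fact \emph{false}. You are trying to show that if some term $D_{w,b^+}(a')$ in your expansion is nonzero (so $w^+\geq b^+$ by induction) then $v^+\geq\lambda$; equivalently, that if $v^+\not\geq\lambda$ then every term vanishes. But already for $n=1$ this fails. Take $\lambda=(3)$ and $v=(1,2)$, so $v^+=(2,1)\not\geq(3)$. With $b=(0)$, $\alpha_0=1$, $\beta_0=0$ (valid whenever $a_0\geq 1$), the balance $\beta_0-\alpha_0=v_1-|\lambda|+|b|$ holds, and the surviving inner constant term is $D_{(0),(0)}\big((a_0)\big)=1\neq 0$. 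Thus nonvanishing terms do occur even though $v^+\not\geq\lambda$; the vanishing of $D_{v,\lambda}(a)$ comes from \emph{cancellation} among the terms, not from each term being zero. A term-by-term induction of the type you set up cannot detect such cancellation, so the scheme breaks down at exactly the point you identified, and no refinement of the witness-subset bookkeeping will repair it. Cai's proof proceeds by a different inductive mechanism (adding parameters) rather than by peeling off a variable and appealing to individual-term dominance.
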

We note that the converse of Theorem \ref{thm-Cai} also appears to be true. This is trivially the case for $n=0$, and for $n=1$ we used Maple
to verify that
$D_{v,\lambda}(a)\neq 0$ for
$|v|\leq 23$ and $v^+\geq \lambda$.

In this paper, we are concerned with
the $\lambda=v^+$ case of $D_{v,\lambda}(a)$.
For this case we obtain a recursion for $D_{v,\lambda}(a)$ provided that
the largest part of $v$ occurs with multiplicity one.
Given a sequence $s=(s_0,\dots,s_n)$ and an integer $k$
such that $0\leq k\leq n$, define
$s^{(k)}:=(s_0,\dots,s_{k-1},s_{k+1},\dots,s_n)$.

\begin{theorem}\label{thm-1}
Let $v=(v_0,\dots,v_n)$ be a composition
such that its largest part has multiplicity one in $v$.
Fix a non-negative integer $k$ by $v_k=\max\{v\}$.
Then
\begin{equation}\label{e-Dyson}
D_{v,v^{+}}(a)=q^{\sum_{i=k+1}^n a_i}
\qbinom{v_k+|a|-1}{a_k-1}
D_{v^{(k)},(v^{(k)})^{+}}\big(a^{(k)}\big).
\end{equation}
\end{theorem}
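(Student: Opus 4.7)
The strategy is to combine plethystic identities for the complete symmetric functions with the Gessel--Xin formal Laurent series technique. Split the alphabet as a disjoint union $x^{(a)} = x_k^{(a_k)} \sqcup x^{(a^{(k)})}$, with $x_k^{(a_k)} := (x_k, x_kq, \ldots, x_kq^{a_k-1})$. Applying the plethystic addition formula to each factor of $h_{v^+}(x^{(a)})$ and using the elementary evaluation $h_s(x_k^{(a_k)}) = x_k^s\,\qbinom{s + a_k - 1}{a_k - 1}$, one expands
\[
h_{v^+}(x^{(a)}) = \sum_{s} x_k^{|s|}\, W(s)\, h_{v^+ - s}\bigl(x^{(a^{(k)})}\bigr),
\]
where $s=(s_1,s_2,\ldots)$ ranges over compositions bounded componentwise by $v^+$ and $W(s)$ is an explicit product of $q$-binomial coefficients. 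Correspondingly, the Dyson product factors as $\Delta_a(x) = \Delta_{a^{(k)}}(x^{(k)})\,B_k(x)$, where
\[
B_k(x) = \prod_{i < k}(x_i/x_k;q)_{a_i}(qx_k/x_i;q)_{a_k}\prod_{j > k}(x_k/x_j;q)_{a_k}(qx_j/x_k;q)_{a_j}
\]
collects all $x_k$-dependent factors of the Dyson product.

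Next, substituting both decompositions into $D_{v,v^+}(a)$ and writing $\CT_x = \CT_{x^{(k)}}\CT_{x_k}$ as iterated constant terms of formal Laurent series, each summand factors into an outer $(n-1)$-variable constant term in $x^{(k)}$---of the shape $D_{v^{(k)},\mu}(a^{(k)})$ for some partition $\mu$---multiplied by an inner $x_k$-constant term $\CT_{x_k}\bigl(x_k^{|s|-v_k}\,B_k(x)\bigr)$. The inner $x_k$-constant terms are evaluated by the Gessel--Xin partial-fraction / formal-series argument. The $q$-power $q^{\sum_{i>k}a_i}$ in \eqref{e-Dyson} originates from this step, arising from the factors $(qx_j/x_k;q)_{a_j}$ with $j>k$, whose Laurent expansion in $x_k$ contributes explicit powers of $q$.

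The final step is to collapse the resulting sum to a single outer constant term. By Theorem~\ref{thm-Cai}, $D_{v^{(k)},\mu}(a^{(k)})$ vanishes unless $(v^{(k)})^+\geq\mu$ in dominance order. Combined with the weight constraints from the Laurent expansion and the multiplicity-one hypothesis on $v_k=\max\{v\}$, this forces the only surviving outer constant term to be $D_{v^{(k)},(v^{(k)})^+}(a^{(k)})$; collecting all plethystic weights $W(s)$ and the Gessel--Xin $q$-power contributions then assembles the global $q$-binomial $\qbinom{v_k+|a|-1}{a_k-1}$ via a $q$-Vandermonde-type summation, yielding \eqref{e-Dyson}.

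The hard part will be the Gessel--Xin evaluation of the inner $x_k$-constant terms and the subsequent $q$-series summation that promotes the local $q$-binomials $\qbinom{s+a_k-1}{a_k-1}$ (supported on the block of size $a_k$) to the global $q$-binomial at size $|a|$. The multiplicity-one assumption on $v_k$ is essential here, since without it multiple outer partitions $\mu$ would satisfy Cai's dominance bound and the right-hand side of \eqref{e-Dyson} could not collapse to a single $D$-term.
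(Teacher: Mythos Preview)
Your proposal takes a genuinely different route from the paper, and it contains a structural gap that I do not see how to repair.

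The paper does \emph{not} decompose $D_{v,v^+}(a)$ by splitting the alphabet and summing. Instead it introduces an auxiliary constant term $D_{v,\lambda}(a,m)$ with an extra parameter $m$, uses a cyclic action to reduce to $k=0$, and then proves the recursion by polynomial interpolation in $q^{a_0}$: (i) $D_{v,v^+}(a,m)$ is a polynomial in $q^{a_0}$ of degree at most $|a|-a_0+v_0$; (ii) it vanishes at $q^{a_0}=q^{0},q^{-1},\dots,q^{-(|a|-a_0+v_0-1)}$, which is where the Gessel--Xin machinery combined with plethystic substitutions is actually used; (iii) its value at $a_0=1$ is computed and matches the right-hand side. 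Cai's orthogonality enters only in step (iii), to kill spurious terms in the $a_0=1$ evaluation.

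The gap in your approach is the claimed factorization ``each summand factors into an outer constant term of the shape $D_{v^{(k)},\mu}(a^{(k)})$ times the inner $x_k$-constant term $\CT_{x_k}\bigl(x_k^{|s|-v_k}B_k(x)\bigr)$''. This inner constant term is \emph{not} a scalar: $B_k(x)$ is homogeneous of total degree $0$, so the coefficient of $x_k^{v_k-|s|}$ in $B_k(x)$ is a genuine Laurent polynomial of degree $|s|-v_k$ in the remaining variables $x_i$, $i\neq k$. Once you expand it into monomials $(x^{(k)})^c$, the outer constant terms you obtain are $D_{v^{(k)}-c,\,\mu}(a^{(k)})$ for many different shifts $c$, not $D_{v^{(k)},\mu}(a^{(k)})$. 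Cai's dominance criterion $w^+\geq\mu$ does not force $c=0$ and $s=(v_k,0,\dots,0)$ simultaneously; already for $n=1$ (where the outer constant term is trivial and Cai's theorem gives no constraint) your expansion produces a full double sum over $(s_1,s_2)$ with nonzero summands, and no single term survives. Thus the ``collapse to a single $D$-term'' does not occur, and what remains is a multi-sum of $q$-binomials times inner-CT coefficients whose closed-form evaluation is at least as hard as the theorem itself. The appeal to a ``$q$-Vandermonde-type summation'' is not a substitute for this missing identity.
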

For example, if $v={(0,2,3,2,1)}$, then
$v^+={(3,2,2,1,0)}$, $k=2$ and $v^{(2)}={(0,2,2,1)}$.
If all the non-zero parts
of $v$ have multiplicity one, then we can iterate \eqref{e-Dyson}.
Together with the $q$-Dyson identity \eqref{q-Dyson} this
yields a closed-form formula for
$D_{v,v^{+}}(a)$.

\begin{cor}\label{cor-1}
Let $v=(v_0,\dots,v_n)$ be a composition all of whose
positive parts have multiplicity one,
and set $l:=\ell(v)$,
the number of the non-zero parts of $v$.
Let $\sigma\in\mathfrak{S}_{n+1}$ be any permutation
for which $\sigma(v):=(v_{\sigma(0)},\dots,v_{\sigma(n)})=v^{+}$.
Then
\begin{equation}\label{e-cor1}
D_{v,v^{+}}(a)=
q^{c}
\prod_{i=0}^{l-1}
\qbinom{v_{\sigma(i)}+|a|-a_{\sigma(0)}-\dots-a_{\sigma(i-1)}-1}
{a_{\sigma(i)}-1}
\prod_{i=l}^n
\qbinom{a_{\sigma(i)}+\dots+a_{\sigma(n)}}{a_{\sigma(i)}},
\end{equation}
where
\[
c=\sum_{i=0}^{l-1}\sum_{\substack{j=\sigma(i)+1\\[1pt]
j\notin \{\sigma(0),\dots,\sigma(i-1)\}}}^n a_j.
\]
\end{cor}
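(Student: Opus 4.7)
The plan is to iterate Theorem~\ref{thm-1} exactly $l$ times, each application stripping off the current largest part of $v$, and then to evaluate the residual constant term by the ordinary $q$-Dyson identity \eqref{q-Dyson}. Formally, I would proceed by induction on $l=\ell(v)$. The base case $l=0$ forces $v=(0^{n+1})$, so $h_{v^+}=1$ and $x^{-v}=1$, and $D_{v,v^+}(a)$ is exactly the left-hand side of \eqref{q-Dyson}. A telescoping identity gives
\[
\frac{(q;q)_{|a|}}{(q;q)_{a_0}\cdots(q;q)_{a_n}}=\prod_{i=0}^{n}\qbinom{a_{\sigma(i)}+\cdots+a_{\sigma(n)}}{a_{\sigma(i)}}
\]
for any $\sigma\in\mathfrak{S}_{n+1}$ (the $i=n$ factor being $1$); since the first product in \eqref{e-cor1} and the exponent $c$ are both empty when $l=0$, this matches \eqref{e-cor1}.

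For the inductive step, since all positive parts of $v$ have multiplicity one, the overall maximum $v_{\sigma(0)}$ in particular has multiplicity one in $v$. Setting $k=\sigma(0)$ in Theorem~\ref{thm-1} gives
\[
D_{v,v^+}(a)=q^{\sum_{j=\sigma(0)+1}^{n}a_j}\,\qbinom{v_{\sigma(0)}+|a|-1}{a_{\sigma(0)}-1}\,D_{v^{(\sigma(0))},\,(v^{(\sigma(0))})^{+}}\bigl(a^{(\sigma(0))}\bigr),
\]
and the composition $v^{(\sigma(0))}$ has exactly $l-1$ positive parts, still of multiplicity one. A permutation in $\mathfrak{S}_{n}$ sorting $v^{(\sigma(0))}$ is $\sigma'(i):=\tau\bigl(\sigma(i+1)\bigr)$ for $0\le i\le n-1$, where $\tau(j):=j$ for $j<\sigma(0)$ and $\tau(j):=j-1$ for $j>\sigma(0)$.

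Applying the inductive hypothesis to the right-hand side with the permutation $\sigma'$, and using $v^{(\sigma(0))}_{\sigma'(i)}=v_{\sigma(i+1)}$, $a^{(\sigma(0))}_{\sigma'(i)}=a_{\sigma(i+1)}$ and $|a^{(\sigma(0))}|=|a|-a_{\sigma(0)}$, the reindexing $i\mapsto i+1$ converts the induction's first product (over $0\le i\le l-2$) into the $1\le i\le l-1$ part of \eqref{e-cor1}, which then combines with the explicit $i=0$ binomial above to give the full first product. The induction's second product becomes the $l\le i\le n$ part of \eqref{e-cor1}. Finally, since $\tau$ is order-preserving on $\{0,\dots,n\}\setminus\{\sigma(0)\}$, the constraints $j>\sigma'(i)$ and $j\notin\{\sigma'(0),\dots,\sigma'(i-1)\}$ pull back to $j'>\sigma(i+1)$ and $j'\notin\{\sigma(0),\dots,\sigma(i)\}$, so the $q$-exponent coming from induction contributes the $i\ge 1$ terms of $c$, while the initial factor $\sum_{j>\sigma(0)}a_j$ accounts for $i=0$.

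The main obstacle is therefore purely notational: keeping the deletion map $\tau$, the induced permutation $\sigma'$, and the shifted index set consistent with the original indexing so that each factor in \eqref{e-cor1} and the $q$-power $c$ are assembled correctly. No analytic input beyond Theorem~\ref{thm-1} and \eqref{q-Dyson} is required.
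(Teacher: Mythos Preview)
Your proposal is correct and follows exactly the approach indicated in the paper: iterate Theorem~\ref{thm-1} once for each of the $l$ positive parts of $v$ and then close out with the $q$-Dyson identity~\eqref{q-Dyson}. The paper only sketches this in one sentence, so your careful handling of the reindexing via the deletion map $\tau$ and the induced permutation $\sigma'$ fills in precisely the bookkeeping the paper omits.
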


Clearly, there are $(n-l+1)!$ admissible permutations
$\sigma\in\mathfrak{S}_{n+1}$.
Since the product
\[
\prod_{i=l}^n
\qbinom{a_{\sigma(i)}+\dots+a_{\sigma(n)}}{a_{\sigma(i)}}
=\frac{(q;q)_{a_{\sigma(l)}+\dots+a_{\sigma(n)}}}{(q;q)_{a_{\sigma(l)}}\cdots (q;q)_{a_{\sigma(n)}}}
\]
is symmetric in $a_{\sigma(l)},\dots,a_{\sigma(n)}$
each such $\sigma$ results in the same expression
for $D_{v,v^{+}}(a)$.
When $l=1$ Corollary~\ref{cor-1} reduces to
the non-zero part of Kadell's (ex)-conjecture,
and when $l=n$ or $l=n+1$ it reduces to a result
by K\'{a}rolyi, Lascoux and Warnaar \cite[Proposition 4.5]{KLW}.

\medskip

The method employed to prove Theorem~\ref{thm-1}
is based on the well-known fact that two polynomials of degree at most $d$ are equal if they are equal at  $d+1$ distinct points.
This method was used previously to prove several
constant term identities, such as in the
Gessel--Xin proof of the $q$-Dyson identity \cite{gess-xin2006} or in the proof of what are
known as first-layer formulas for
$q$-Dyson products \cite{LXZ}.

It is not difficult to show that
$D_{v,v^{+}}(a)$ is a polynomial of degree $v_k+|a|-a_k$
in $q^{a_k}$.
Assuming the conditions of Theorem~\ref{thm-1},
it is also not hard to show that this polynomial
vanishes if
$-a_k\in \{0,1,\dots,v_k+|a|-a_k-1\}$.
However,
since $D_{v,v^{+}}(a)$ is not actually defined for negative integer values of $a_k$, we need to extend
the definition to all integers $a_k$.
For this, we require the theory of iterated Laurent series, developed in \cite{xinresidue}.
In the field of iterated Laurent series,
$D_{v,v^{+}}(a)$ is well-defined for all $a_k\in\mathbb{Z}$ and can
again be viewed as a polynomial in $q^{a_k}$.
To prove the above vanishing properties of $D_{v,v^{+}}(a)$
(again with $v$ as in the theorem),
we combine the Gessel--Xin method with plethystic substitutions,
a powerful tool from the theory of symmetric functions.
It trivially follows that the right-hand side of
\eqref{e-Dyson} satisfies the same polynomiality and
vanishing properties.
By degree considerations, one may conclude that
the left and right-hand sides of \eqref{e-Dyson} are
equal if they agree at one additional point.

The remainder of this paper is organised as follows.
In the next section we introduce some basic
notation used throughout this paper.
In Sections~\ref{sec-ple} and \ref{sec-iterate}
we introduce the two main tools used in this paper---plethystic notation
and substitutions, and iterated Laurent series respectively.
In Section~\ref{sec-proof} we give a proof of Theorem~\ref{thm-1}.

\section{Basic notation}\label{sec-notation}

In this section we introduce some basic notation
used throughout this paper.

For $v=(v_0,v_1,\dots,v_n)$ a sequence, we write
$|v|$ for the sum of its entries, i.e.,
$|v|=v_0+\cdots+v_n$.
Moreover, if $v\in\mathbb{R}^{n+1}$ then we write
$v^{+}$ for the sequence obtained from $v$
by ordering its elements in weakly decreasing order.
If all the entries of $v$ are non-negative integers, we refer to $v$ as a (weak) composition.
A partition is a sequence $\lambda=({\lambda_0,\lambda_1,\dots)}$ of non-negative integers such that
${\lambda_0\geq \lambda_1\geq \cdots}$ and
only finitely-many $\lambda_i$ are positive.
The length of a partition $\lambda$, denoted
$\ell(\lambda)$ is defined to be the number of non-zero $\lambda_i$ (such $\lambda_i$ are known as the parts of $\lambda$).
We adopt the convention of not displaying the
tails of zeros of a partition.
We say that $|\lambda|=\lambda_0+\lambda_1+\cdots$ is the size of the partition $\lambda$.
We adopt the standard dominance order on
the set of partitions of the same size.
If $\lambda,\mu$ are partitions such that $|\lambda|=|\mu|$ then $\lambda\geq \mu$ if
$\lambda_0+\cdots+\lambda_i\geq \mu_0+\cdots+\mu_i$ for all $i\geq 0$.
Similarly, for two integer sequences
$v=(v_0,\dots,v_n)$ and $u=(u_0,\dots,u_m)$, we
write $v\geq u$ if $v_0+\cdots+v_i\geq u_0+\cdots+u_i$
for all $i\geq 0$, where we set $v_i=0$ for $i>n$ and $u_j=0$ for $j>m$.
Note here that we do not require that $|v|=|w|$.
As usual, we write $\lambda>\mu$ if $\lambda\geq \mu$
but $\lambda\neq \mu$, and
$v>u$ if $v\geq u$ but $v\neq u$.

The infinite $q$-shifted factorial is defined as
\[
(z)_{\infty}=(z;q)_{\infty}:=
\prod_{i=0}^{\infty}(1-zq^i),
\]
where, typically, we suppress the base $q$.
Then, for $k$ an integer,
\[
(z)_k=(z;q)_k:=\frac{(z;q)_{\infty}}
{(zq^k;q)_{\infty}}.
\]
Note that
\[
(z)_k=
\begin{cases}
(1-z)(1-zq)\cdots (1-zq^{k-1}) & \text{if $k\geq 0,$}
\\[3mm]
\displaystyle
\frac{1}{(1-zq^k)(1-zq^{k+1})\cdots(1-zq^{-1})}
& \text{if $k<0$.}
\end{cases}
\]
Using the above we can define the
$q$-binomial coefficient as
\[
\qbinom{n}{k}=\frac{(q^{n-k+1})_k}{(q)_k}
\]
for $n$ an arbitrary integer and $k$ a non-negative integer.

\section{Plethystic notation}\label{sec-ple}

Plethystic or $\lambda$-ring notation is a device to facilitate computations in the ring of symmetric functions.
The notion was introduced by Grothendieck \cite{Grothendieck} in
the study of Chern classes.
Nowadays
plethystic notation has become an indispensable computational tool for
organizing and manipulating intricate relationships between symmetric
functions.
In this section, we briefly introduce plethystic notation and substitutions.
For more details, see \cite{haglund,Lascoux,Mellit,RW}.

Denote by $\Lambda_{\mathbb{F}}$ the ring of symmetric functions in countably many variables with coefficients in a field $\mathbb{F}$.
For an alphabet $X=(x_0,x_1,\dots)$,
we additively write $X:=x_0+x_1+\cdots$, and
use plethystic brackets to indicate this additive notation:
\[
f(X)=f(x_0,x_1,\dots)=f[x_0+x_1+\cdots]=f[X], \quad
\text{for $f\in \Lambda_{\mathbb{F}}$.}
\]

For $r$ a positive integer, let $p_r$ be the power sum
symmetric function in the alphabet $X$, defined by
\[
p_r=\sum_{i\geq 0}x_i^r.
\]
In addition, we set $p_0=1$.
For a partition $\lambda=(\lambda_0,\lambda_1,\dots)$, let
\[
p_{\lambda}=p_{\lambda_0}p_{\lambda_1}\cdots.
\]
The $p_r$ are algebraically independent over
$\mathbb{Q}$, and the $p_{\lambda}$ form a basis of $\Lambda_{\mathbb{Q}}$ \cite{Mac95}. That is,
\[
\Lambda_{\mathbb{Q}}=\mathbb{Q}[p_1,p_2,\dots].
\]
Now we introduce a consistent arithmetic on alphabets in terms of the basis
of power sums. In particular,
a power sum whose argument is the sum, difference or Cartesian product
of two alphabets $X$ and $Y$ is defined as
\begin{subequations}\label{asm}
\begin{align}
\label{asm1}
p_r[X+Y]&=p_r[X]+p_r[Y], \\
p_r[X-Y]&=p_r[X]-p_r[Y], \\
p_r[XY]&=p_r[X]p_r[Y].\label{asm3}
\end{align}
\end{subequations}
For example, for the alphabets $X=x_1+x_2+\cdots$ and $Y=y_1+y_2+\cdots$,
the sum of $X$ and $Y$ is $X+Y=x_1+x_2+\cdots+y_1+y_2+\cdots$.
In general we cannot give meaning to division by an arbitrary
alphabet and only division by $1-t$ (the difference of two one-letter
alphabets with ``letters'' $1$ and $t$ respectively) is meaningful.
In particular
\begin{equation}\label{division}
p_r\Big[\frac{X}{1-t}\Big]=\frac{p_r[X]}{1-t^r}.
\end{equation}
Note that the alphabet $1/(1-t)$ may be interpreted as
the infinite alphabet $1+t+t^2+\cdots$.
Indeed, by \eqref{asm1} and \eqref{asm3}
\[
p_r[X(1+t+t^2+\cdots)]=p_r[X]\sum_{k=0}^{\infty} p_r[t^k]=
p_r[X]\sum_{k=0}^{\infty} t^{kr}=\frac{p_r[X]}{1-t^r}.
\]

Having the above rules for plethystic substitutions
we can view symmetric functions as operators acting on alphabets,
and by carrying out complicated substitutions we can turn
simple algebraic identities into much more complicated ones.
For example, since
\[
\sum_{i\leq j} x_ix_j=
\tfrac{1}{2}\Big(\sum_i x_i\Big)^2+
\tfrac{1}{2}\sum_i x_i^2
\]
we have
\[
h_2=\tfrac{1}{2}\big(p_1^2+p_2)
\]
as an identity in the algebra of symmetric functions.
Consequently,
\[
h_2[X]=\tfrac{1}{2}(p_1^2[X]+p_2[X])
\]
where $X$ can be \emph{any} alphabet, obtained by combining the rules
of addition, subtraction, multiplication and division described in
\eqref{asm} and \eqref{division}.

For $r$ a positive integer, let the elementary symmetric
function be defined as
\[
e_r=\sum_{0\leq i_1<\dots<i_r}x_{i_1}x_{i_2}\cdots x_{i_r}. 
\]
Also set $e_0=1$.
By the definition of the elementary symmetric function,
one can observe the following simple fact:
For $r$ a positive integer and $X$ an alphabet of finitely many variables,
\begin{equation}\label{e-vanish}
e_r[X]=0 \qquad \text{if $|X|<r$,}
\end{equation}
where $|X|$ denotes the cardinality of $X$.
This simple fact plays an important role in proving vanishing
properties of expressions of the form $D_{v,\lambda}(a)$.

Finally, we need the following two basic plethystic identities.
One can find proofs in \cite[Theorem 1.27]{haglund}.
\begin{prop}\label{Ple-basic}
Let $X$ and $Y$ be two alphabets. For $r$ a non-negative integer,
\begin{align}\label{e-xy}
h_{r}[X+Y]&=\sum_{i=0}^rh_i[X]h_{r-i}[Y], \\
h_{r}[-X]&=(-1)^re_r[X].\label{e-he}
\end{align}
\end{prop}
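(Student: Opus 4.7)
The plan is to derive both identities from the exponential formula relating the complete homogeneous symmetric functions to power sums, combined with the additivity rule \eqref{asm1}. Recall that from the generating function \eqref{e-gfcomplete}, taking logarithms of $\prod_i 1/(1-zx_i)$ and expanding via the Taylor series of $\log(1-u)$ gives the classical identity
\[
\sum_{r\ge 0} z^r h_r = \exp\!\left(\sum_{r\ge 1} \frac{z^r p_r}{r}\right)
\]
in $\Lambda_{\mathbb{Q}}[[z]]$. Because $h_r$ is a polynomial in $p_1,\ldots,p_r$, this identity transfers verbatim to any plethystic alphabet after replacing each $p_r$ by $p_r[\cdot]$.

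For the first identity, I would apply the additivity rule \eqref{asm1} inside the exponent:
\[
\sum_{r\ge 0} z^r h_r[X+Y] = \exp\!\left(\sum_{r\ge 1} \frac{z^r (p_r[X]+p_r[Y])}{r}\right) = \exp\!\left(\sum_{r\ge 1} \frac{z^r p_r[X]}{r}\right)\exp\!\left(\sum_{r\ge 1} \frac{z^r p_r[Y]}{r}\right),
\]
which factors as $\bigl(\sum_i z^i h_i[X]\bigr)\bigl(\sum_j z^j h_j[Y]\bigr)$. Extracting the coefficient of $z^r$ yields \eqref{e-xy}.

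For the second identity, I would use $p_r[-X]=-p_r[X]$ from the subtraction rule in \eqref{asm} to obtain
\[
\sum_{r\ge 0} z^r h_r[-X] = \exp\!\left(-\sum_{r\ge 1} \frac{z^r p_r[X]}{r}\right),
\]
which is the multiplicative inverse of $\sum_{r\ge 0} z^r h_r[X]$. For an ordinary alphabet $X=\sum_i x_i$ this inverse equals $\prod_i (1-zx_i)$, and since $\sum_{r\ge 0}z^r e_r[X]=\prod_i(1+zx_i)$, the substitution $z\mapsto -z$ identifies it with $\sum_{r\ge 0}(-1)^r z^r e_r[X]$. Comparing coefficients of $z^r$ gives \eqref{e-he}. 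For virtual alphabets (arbitrary $\mathbb{Z}$-linear combinations of formal letters), the identity persists because both sides are polynomial expressions in the $p_r[X]$, which are algebraically independent generators.

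There is no real obstacle here; the only subtlety is making sure the manipulations stay within the formal power series ring $\Lambda_{\mathbb{Q}}[[z]]$ and that the identities, once verified on the power-sum basis, extend to all alphabets by the consistency of the rules \eqref{asm}--\eqref{division}. Since the statement is standard and a detailed treatment is given in \cite[Theorem 1.27]{haglund}, the argument above is the natural self-contained route.
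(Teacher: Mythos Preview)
Your argument is correct. Note, however, that the paper does not supply its own proof of this proposition at all---it simply refers the reader to \cite[Theorem~1.27]{haglund}. Your generating-function derivation via the exponential formula is the standard self-contained route and matches what one finds in the cited reference; the only cosmetic improvement would be, in the second identity, to invoke the companion formula $\sum_{r\ge 0} z^r e_r=\exp\bigl(\sum_{r\ge 1}(-1)^{r-1}z^r p_r/r\bigr)$ directly rather than first treating ordinary alphabets and then extending, since that makes the equality of the two exponentials immediate for arbitrary alphabets in one step.
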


\section{Constant term evaluations using iterated Laurent series}\label{sec-iterate}

In this section we introduce some essential ingredients of the field
of iterated Laurent series and describe a basic lemma for extracting
constant terms from rational functions.
Throughout this paper we let $K=\CC(q)$ and work in the field of iterated
Laurent series $K\langle\!\langle x_n, x_{n-1},\dots,x_0\rangle\!\rangle
=K(\!(x_n)\!)(\!(x_{n-1})\!)\cdots (\!(x_0)\!)$, unless specified otherwise.
Elements of $K\langle\!\langle x_n,x_{n-1},\dots,x_0\rangle\!\rangle$
are regarded first as Laurent series in $x_0$, then as
Laurent series in $x_1$, and so on.
The reason the field $K\langle\!\langle x_n, x_{n-1},\dots,x_0\rangle\!\rangle$
is highly suitable for proving constant term identities is
explained in~\cite{gess-xin2006}.
For a more detailed account of the properties of this field,
see \cite{xinresidue} and~\cite{xiniterate}.
Crucial in what is to follow is that the field $K(x_0,\dots,x_n)$ of
rational functions in the variables $x_0,\dots,x_n$ with coefficients in $K$
forms a subfield of $K\langle\!\langle x_n, x_{n-1},\dots,x_0\rangle\!\rangle$, so that every rational function is identified with its unique Laurent
series expansion.

The following series expansion of $1/(1-cx_i/x_j)$
for $c\in K\setminus \{0\}$ forms a key ingredient in our approach:
\[
\frac{1}{1-c x_i/x_j}=
\begin{cases} \displaystyle \sum_{l\geq 0} c^l (x_i/x_j)^l
& \text{if $i<j$}, \\[5mm]
\displaystyle -\sum_{l<0} c^l (x_i/x_j)^l
& \text{if $i>j$}.
\end{cases}
\]
Thus, the constant term in $x_i$ of $1/(1-c x_i/x_j)$
is $1$ if $i<j$ and $0$ if $i>j$.
That is,
\begin{equation}
\label{e-ct}
\CT_{x_i} \frac{1}{1-c x_i/x_j} =
\begin{cases}
    1 & \text{if $i<j$}, \\
    0 & \text{if $i>j$}, \\
\end{cases}
\end{equation}
where, for $f\in K\langle\!\langle x_n, x_{n-1},\dots,x_0\rangle\!\rangle$,
we use the notation $\displaystyle\CT_{x_i} f$ to denote taking the constant term
of $f$ with respect to $x_i$.
An important property of the constant term operators defined this
way is their commutativity:
\[
\CT_{x_i} \CT _{x_j} f = \CT_{x_j} \CT_{x_i} f.
\]
This implies that the operation of taking the constant term in
$K\langle\!\langle x_n,x_{n-1},\dots,x_0\rangle\!\rangle$ is well-defined.

The following lemma is a
basic tool for
extracting constant terms from rational functions and
has appeared previously in~\cite{gess-xin2006}.
\begin{lem}\label{lem-almostprop}
For a positive integer $m$, let $p(x_k)$ be a Laurent polynomial
in $x_k$ of degree at most $m-1$ with coefficients in
$K\langle\!\langle x_n,\dots,x_{k-1},x_{k+1},\dots,x_0\rangle\!\rangle$.
Let $0\leq i_1\leq\dots\leq i_m\leq n$ such that all $i_r\neq k$,
and define
\begin{equation}\label{e-defF}
f=\frac{p(x_k)}{\prod_{r=1}^m (1-x_k/c_r x_{i_r})},
\end{equation}
where $c_1,\dots,c_m\in K\setminus \{0\}$ such that $c_r\neq c_s$ if $x_{i_r}=x_{i_s}$.
Then
\begin{equation}\label{e-almostprop}
\CT_{x_k} f=\sum_{\substack{r=1 \\[1pt] i_r>k}}^m
\big(f\,(1-x_k/c_rx_{i_r})\big)\Big|_{x_k=c_r x_{i_r}}.
\end{equation}
\end{lem}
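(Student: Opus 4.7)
The plan is to obtain \eqref{e-almostprop} by carrying out a partial fraction decomposition of $f$ in the variable $x_k$ and then applying \eqref{e-ct} to each resulting simple summand.

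First I would observe that the $m$ linear factors $1-x_k/c_rx_{i_r}$ in the denominator of $f$ are pairwise distinct as elements of the coefficient field $R:=K\langle\!\langle x_n,\dots,x_{k-1},x_{k+1},\dots,x_0\rangle\!\rangle$: when $x_{i_r}=x_{i_s}$ the hypothesis forces $c_r\neq c_s$, while when $x_{i_r}\neq x_{i_s}$ the factors have manifestly distinct roots $c_rx_{i_r}\neq c_sx_{i_s}$ in $R$. Working in $R(x_k)$, standard partial fractions therefore yields an expansion
\[
f=S(x_k)+\sum_{r=1}^m\frac{A_r}{1-x_k/c_rx_{i_r}},
\]
with $S(x_k)\in R[x_k,x_k^{-1}]$ and $A_r\in R$, where the residue formula gives
\[
A_r=\bigl(f\,(1-x_k/c_rx_{i_r})\bigr)\Big|_{x_k=c_rx_{i_r}};
\]
this evaluation is legitimate because multiplying by $(1-x_k/c_rx_{i_r})$ cancels the unique factor of the denominator vanishing at $x_k=c_rx_{i_r}$.

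Next I would verify that $S(x_k)$ has no $x_k^0$ term, so that $\CT_{x_k}S(x_k)=0$. Write $p(x_k)=x_k^{-N}\tilde p(x_k)$ with $\tilde p\in R[x_k]$ an honest polynomial; since $\deg_{x_k}p\le m-1$ we have $\deg_{x_k}\tilde p\le m-1+N$, which is strictly less than the degree $m+N$ of the polynomial denominator $x_k^N\prod_r(1-x_k/c_rx_{i_r})$. Applying partial fractions to $\tilde p/(x_k^N\prod_r(1-x_k/c_rx_{i_r}))$ therefore produces no polynomial-in-$x_k$ part, and the pole of order $N$ at $x_k=0$ only contributes a finite sum $\sum_{n=1}^N b_nx_k^{-n}$ of strictly negative powers of $x_k$. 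Hence $S(x_k)$ is supported on nonzero powers of $x_k$ and its constant term vanishes.

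Finally, applying $\CT_{x_k}$ termwise—which is legal because it is $R$-linear and the $A_r$ lie in $R$—and invoking \eqref{e-ct} to record that $\CT_{x_k}\bigl(1/(1-x_k/c_rx_{i_r})\bigr)$ equals $1$ when $i_r>k$ and $0$ when $i_r<k$, I arrive at
\[
\CT_{x_k}f=\sum_{\substack{r=1\\i_r>k}}^m A_r,
\]
which is precisely \eqref{e-almostprop}. I anticipate no deep obstacle here; the only care required is the bookkeeping needed to justify the partial fraction decomposition over the iterated Laurent series field $R$ rather than a classical coefficient field, and this is handled by the distinctness of the poles $c_rx_{i_r}$ in $R$ together with the degree bound $\deg_{x_k}p\le m-1$.
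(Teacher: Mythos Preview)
Your proposal is correct and follows exactly the standard partial-fractions argument that underlies this lemma; the paper itself does not supply a proof but cites \cite{gess-xin2006}, where precisely this approach is used. Your bookkeeping for the Laurent tail (writing $p(x_k)=x_k^{-N}\tilde p(x_k)$ and noting that the pole at $x_k=0$ contributes only strictly negative powers) is the right way to handle the fact that $p$ is merely a Laurent polynomial, and the distinctness of the poles $c_rx_{i_r}$ together with the degree bound $\deg_{x_k}p\le m-1$ is exactly what makes the decomposition go through with no polynomial part.
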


\section{Proof of Theorem \ref{thm-1}}\label{sec-proof}

To prove Theorem~\ref{thm-1}, which is a recursion for $D_{v,v^{+}}(a)$, we shall first prove a
similar recursion --- see Theorem~\ref{thm-2} below --- for a more general constant term, denoted $D_{v,\lambda}(a,m)$
and defined in \eqref{D-m} below.
As shown in Section~\ref{s-cyclic}, using a cyclic action $\gamma$ on $D_{v,\lambda}(a,m)$, Theorem~\ref{thm-2} implies Theorem~\ref{thm-1}.

\subsection{The constant term $D_{v,\lambda}(a,m)$}\label{s-cyclic}

In this subsection we define $D_{v,\lambda}(a,m)$ mentioned above and show that it suffices to consider this
constant term for those $v=(v_0,\dots,v_n)\in\mathbb{Z}^{n+1}$ for which $\max\{v\}=v_0$.

For $m\in\{0,1,\dots,n+1\}$ and $a=(a_0,\dots,a_n)$ a composition,
define the alphabet
\begin{multline*}
x_m^{(a)}:=(x_0q^{-1},x_0,\dots,x_0 q^{a_0-2}
,\dots,x_{m-1}q^{-1},x_{m-1},\dots,x_{m-1} q^{a_{m-1}-2},  \\
x_m,x_m q,\dots,x_m q^{a_m-1},\dots,x_n,x_n q,\dots,x_n q^{a_n-1}).
\end{multline*}
Note that, plethystically,
\begin{equation}\label{alphabet-m}
x_m^{(a)}=\sum_{i=0}^n \frac{1-q^{a_i}}{1-q}\, x_i q^{-\chi(i<m)},
\end{equation}
where $\chi$ is the truth function.
For $v=(v_0,\dots,v_n)\in \mathbb{Z}^{n+1}$, $\lambda$ a partition such that $|\lambda|=|v|$, and $m$ and $a$ as above,
we define the constant term
\begin{equation}\label{D-m}
D_{v,\lambda}(a,m):=\CT_x
x^{-v}h_{\lambda}\big(x_m^{(a)}\big)
\prod_{0\leq i<j\leq n}
(x_i/x_j)_{a_i}(qx_j/x_i)_{a_j}.
\end{equation}
Clearly, the alphabet $x^{(a)}$ and constant term $D_{v,\lambda}(a)$, defined in \eqref{alphabet-x}
and \eqref{D-Kadell} respectively, are given by
$x^{(a)}=x_0^{(a)}$ and $D_{v,\lambda}(a)=D_{v,\lambda}(a,0)$.
By the homogeneity of the complete symmetric function $h_{\lambda}$ and the fact that
$x^{(a)}_{n+1}=x^{(a)}_0/q$,
\begin{equation}\label{reduce}
D_{v,\lambda}(a,n+1)=q^{-|\lambda|}D_{v,\lambda}(a,0).
\end{equation}
Hence, it suffices to restrict the range of $m$ to $0\leq m\leq n$ or $1\leq m\leq n+1$.

As in \cite{LXZ}, for $f\in K\langle\!\langle x_n, x_{n-1},\dots,x_0\rangle\!\rangle$, define the cyclic action
$\gamma$ by
\[
\gamma \big(f(x_0,x_1,\dots,x_n)\big)=f(x_1,x_2,\dots,x_n,x_0/q).
\]
Then $\displaystyle\CT_x f =\CT_x \gamma(f)$, and, for $0\leq m\leq n$,
\begin{equation}\label{relation-gamma}
\gamma\big(D_{v,\lambda}(a,m)\big)=q^{v_n}D_{\gamma^{-1}(v),\lambda}\big(\gamma^{-1}(a),m+1\big),
\end{equation}
where $\gamma(v):=(v_1,\dots,v_n,v_0)$.
For $k\in \{0,1,\dots,n\}$, by applying \eqref{relation-gamma} exactly $n+1-k$ times and also using \eqref{reduce} we find that
\begin{equation}\label{cycle}
D_{v,\lambda}(a,m)=
\begin{cases}
q^{v_k+\cdots+v_n} D_{\gamma^{-(n+1-k)}(v),\lambda}\big(\gamma^{k-n-1}(a),m'\big) &\text{if $m\leq k$,}\\[3mm]
q^{-v_0-\cdots-v_{k-1}} D_{\gamma^{-(n+1-k)}(v),\lambda}\big(\gamma^{k-n-1}(a),m'\big) &\text{if $m>k$,}
\end{cases}
\end{equation}
where $1\leq m'\leq n+1$ and $m'\equiv m-k\pmod{n+1}$.
In particular, if $k$ is an integer such that $\max\{v\}=v_k$,
then $\gamma^{-(n+1-k)}(v)=(v_k,\dots,v_n,v_0,\dots,v_{k-1})$ has the property that
its first part is its largest part.
Hence \eqref{cycle} allows us to assume without loss of generality that
$v_0=\max\{v\}$.
Also assuming that $v$ is a composition such that $v_0>v_i$ for all $1\leq i\leq n$ we will
prove the following theorem.

\begin{theorem}\label{thm-2}
For $v=(v_0,\dots,v_n)$ a composition such that $v_0=\max\{v\}$ has multiplicity one in $v$,
and $m\in\{1,2,\dots,n+1\}$,
\begin{equation}\label{iterate-m}
D_{v,v^+}(a,m)=q^{\sum_{i=1}^{m-1} a_i-v_0}\qbinom{v_0+|a|-1}{a_0-1}
D_{v^{(0)},(v^{(0)})^+}\big(a^{(0)},m-1\big).
\end{equation}
\end{theorem}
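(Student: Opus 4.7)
The plan is to regard both sides of \eqref{iterate-m} as polynomials in $X:=q^{a_0}$ of degree at most $v_0+A$, where $A:=a_1+\cdots+a_n$, show they share $v_0+A$ common zeros, and match them at one additional value. To allow $a_0$ to range over all integers (so that $X$ is an honest formal variable), I would work throughout in the field $K\langle\!\langle x_n,\dots,x_0\rangle\!\rangle$ of iterated Laurent series, with the $q$-shifted factorial extended to negative indices as in Section~\ref{sec-notation}. Using the plethystic decomposition
\[
x_m^{(a)}=\frac{1-X}{1-q}\,x_0q^{-1}+Y,\qquad Y:=\sum_{i=1}^n\frac{1-q^{a_i}}{1-q}\,x_iq^{-\chi(i<m)},
\]
(valid because $m\geq 1$ forces $\chi(0<m)=1$), the rules \eqref{asm}--\eqref{division} and Proposition~\ref{Ple-basic} make $h_{v^+}(x_m^{(a)})$ polynomial in $X$; combined with the plethystic unfolding $\prod_{j=1}^n(x_0/x_j;q)_{a_0}=\sum_{k\geq 0}(-x_0)^k e_k[Y_0]$, where $Y_0:=\frac{1-X}{1-q}\sum_{j=1}^n x_j^{-1}$, a careful degree count yields the claimed bound $v_0+A$ on the $X$-degree of $D_{v,v^+}(a,m)$.

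The heart of the proof is to show that this polynomial vanishes at each of the $v_0+A$ values $a_0\in\{0,-1,\dots,-(v_0+A-1)\}$. At $a_0=0$ the factor $1-X$ kills the $x_0$-contribution to $x_m^{(a)}$ and $(x_0/x_j;q)_0=1$, so the $x_0$-dependent part of the integrand reduces to $x_0^{-v_0}\prod_{j=1}^n(qx_j/x_0;q)_{a_j}$, which has no constant term in $x_0$ for $v_0\geq 1$. For $a_0=-k$ with $1\leq k\leq v_0+A-1$, the $x_0$-contribution to $x_m^{(a)}$ becomes a ``negative'' alphabet of cardinality $k$; using $h_r[-Z]=(-1)^r e_r[Z]$ together with \eqref{e-vanish}, the plethystic expansion of $h_{v^+}$ truncates. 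Applying the Gessel--Xin residue lemma~\ref{lem-almostprop} to extract $\CT_{x_0}$ produces a sum of residues at poles $x_0=q^s x_j$ arising from the denominator factors of $(x_0/x_j;q)_{-k}$, and a further plethystic manipulation shows that each surviving residue vanishes.

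For the right-hand side, the identity
\[
\qbinom{v_0+|a|-1}{a_0-1}=\frac{1}{(q;q)_{v_0+A}}\prod_{i=0}^{v_0+A-1}(1-Xq^i),
\]
derived from $(z)_k=(z)_\infty/(zq^k)_\infty$, exhibits the $q$-binomial as a polynomial in $X$ of degree $v_0+A$ with zeros exactly at $X=q^{-i}$ for $i=0,\dots,v_0+A-1$; the remaining factor $q^{\sum_{i=1}^{m-1}a_i-v_0}D_{v^{(0)},(v^{(0)})^+}(a^{(0)},m-1)$ is independent of $a_0$. By degree considerations the two sides then agree up to a multiplicative constant in $X$, and I would pin down the constant by specialising to $a_0=1$, where $\qbinom{v_0+A}{0}=1$ and the identity reduces to $D_{v,v^+}((1,a_1,\dots,a_n),m)=q^{\sum_{i=1}^{m-1}a_i-v_0}D_{v^{(0)},(v^{(0)})^+}(a^{(0)},m-1)$, which one checks directly using $(x_0/x_j;q)_1=1-x_0/x_j$ to cancel $x_0$ against $h_{v^+}$ and collapse onto the smaller constant term.

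The main obstacle will be the case $a_0=-k$ with $k\geq 1$ in the vanishing step: coordinating the Gessel--Xin residue extraction with the plethystic truncation of $h_{v^+}$ so that every surviving residue vanishes is the key piece of technical bookkeeping. The hypothesis that $v_0=\max\{v\}$ has multiplicity one enters precisely here, controlling the $x_0$-degrees that the $h_{v^+}$-expansion can supply and thereby preventing them from offsetting the poles of $(x_0/x_j;q)_{-k}$.
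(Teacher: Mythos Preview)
Your three-step architecture---polynomiality in $q^{a_0}$ of degree at most $v_0+A$, vanishing at $a_0\in\{0,-1,\dots,-(v_0+A-1)\}$, and evaluation at $a_0=1$---is exactly the paper's. But two of the steps hide substantial machinery that your sketch does not supply. For the vanishing at $a_0=-d$ with $d\geq1$, a single application of Lemma~\ref{lem-almostprop} in $x_0$ does \emph{not} produce residues that vanish under any plethystic manipulation; the terms $Q(d\Mid u_1;k_1)$ are generically nonzero rational functions in $x_1,\dots,x_n$, and it is their constant terms in those remaining variables that must be shown to vanish. The paper iterates, extracting $\CT_{x_{u_1}},\CT_{x_{u_2}},\dots$ in turn, at each stage re-verifying that the current rational function again has the form \eqref{e-defF} in the new variable (Proposition~\ref{prop-rationalform}; the multiplicity-one hypothesis is needed here at every stage, not only for the initial $x_0$-degree bound). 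Vanishing is established only at the terminal stage $s=n$, and depends on a combinatorial trichotomy for the accumulated pole indices $(k_1,\dots,k_s)$ proved by a tournament argument (Lemma~\ref{lem-import}), together with a computation (Proposition~\ref{lem-subs}) showing that in the surviving case the plethystic argument of $h_{v_0}$, after all substitutions, becomes a genuine alphabet of cardinality $d-\sum_{i=1}^n a_i<v_0$, so that $h_{v_0}$ of its negative vanishes via \eqref{e-he} and \eqref{e-vanish}. Your phrase ``a further plethystic manipulation shows that each surviving residue vanishes'' skips all of this.

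The $a_0=1$ step is likewise not a direct cancellation. After writing $h_{v_0}[x_0/q+Y]=\sum_{r=0}^{v_0}(x_0/q)^{v_0-r}h_r[Y]$ one must extend the sum to $r=\infty$ so that the generating function \eqref{e-gfcomplete} collapses $\sum_{r\geq0}(q/x_0)^r h_r[Y]$ into a product that cancels against $\prod_i(1-x_0/x_i)(qx_i/x_0)_{a_i}$, after which a degree argument in $x_0$ finishes. Justifying that the extra terms $r>v_0$ contribute zero is Lemma~\ref{lem-hvanish}, and its proof relies on Cai's orthogonality (Proposition~\ref{prop-Cai}): those terms are constant terms of the form $D_{w,\mu}$ with $\mu_0=r$ exceeding every entry of $w$, so $w^+\not\geq\mu$ and they vanish. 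Without this external orthogonality input your ``cancel $x_0$ against $h_{v^+}$ and collapse onto the smaller constant term'' does not go through.
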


This theorem, together with \eqref{cycle}, implies Theorem~\ref{thm-1} in a few simple steps.

\begin{proof}[Proof of Theorem~\ref{thm-1}]
Let $k\in \{0,1,\dots,n\}$ be fixed by $v_k=\max\{v\}$.
Taking $m=0$ in \eqref{cycle} we have
\begin{equation}\label{e-gammaD}
D_{v,v^+}(a)
=q^{v_k+\cdots+v_n}D_{\gamma^{-(n+1-k)}(v),v^+}\big(\gamma^{k-n-1}(a),n+1-k\big).
\end{equation}
Here $\gamma^{-(n+1-k)}(v)$ has the property that
its first part, $v_k$, is its unique largest part.
Thus we can apply \eqref{iterate-m} to obtain
\begin{multline*}
D_{\gamma^{-(n+1-k)}(v),v^+}\big(\gamma^{k-n-1}(a),n+1-k\big)  \\
=q^{\sum_{i=k+1}^na_i-v_k}\qbinom{v_k+|a|-1}{a_k-1}
D_{\gamma^{-(n-k)}(v^{(k)}),(v^{(k)})^+}\big(\gamma^{k-n}(a^{(k)}),n-k\big),
\end{multline*}
where $\gamma^{-(n-k)}(v^{(k)})=(v_{k+1},\dots,v_n,v_0,\dots,v_{k-1})$
and $\gamma^{k-n}(a^{(k)})=(a_{k+1},\dots,a_n,a_0,\dots,a_{k-1})$.
Using \eqref{e-gammaD} by taking $(k,v,a)\mapsto (k+1,v^{(k)},a^{(k)})$, we have
\begin{align*}
D_{v^{(k)},(v^{(k)})^+}(a^{(k)})
&=q^{\sum_{i=k+1}^n v_i}D_{\gamma^{-(n-k)}(v^{(k)}),(v^{(k)})^+}\big(\gamma^{k-n}(a^{(k)}),n-k\big).
\end{align*}
As a result,
\[
D_{\gamma^{-(n+1-k)}(v),v^+}\big(\gamma^{k-n-1}(a),n+1-k\big)
=q^{\sum_{i=k+1}^na_i-\sum_{i=k}^n v_i}\qbinom{v_k+|a|-1}{a_k-1}
D_{v^{(k)},(v^{(k)})^+}(a^{(k)}).
\]
Substituting this into \eqref{e-gammaD}, we finally obtain
\[
D_{v,v^{+}}(a)=q^{\sum_{i=k+1}^n a_i}
\qbinom{v_k+|a|-1}{a_k-1}
D_{v^{(k)},(v^{(k)})^{+}}\big(a^{(k)}\big),
\]
completing the proof.
\end{proof}

\subsection{Outline of the proof of Theorem~\ref{thm-2}}

Our proof of Theorem~\ref{thm-2} is quite lengthy and involved, and before
presenting the full details we briefly outline the three key steps.\\

\begin{enumerate}
\item \textbf{Polynomiality} --- We will show that,
for fixed non-negative integers $a_1,\dots,a_n$, the constant term
$D_{v,\lambda}(a,m)$ is a polynomial in $q^{a_0}$ of degree at most
$a_1+\cdots+a_n+v_0$.\\

\item \textbf{Determination of roots} --- We will show that
$D_{v,v^+}(a,m)$ vanishes for $-a_0\in\{0,1,\dots,a_1+\cdots+a_n+v_0-1\}$
if $v=(v_0,\dots,v_n)$ is a composition such that $v_0=\max\{v\}$ has multiplicity one in $v$.\\

\item \textbf{Value at $\boldsymbol{a_0=1}$} --- Assuming the conditions of Theorem~\ref{thm-2},
we will show that $D_{v,v^+}(a,m)$
evaluated at $a_0=1$ can be  expressed as the same constant term with $(n,m)\mapsto (n-1,m-1)$. That is
\[
D_{v,v^+}(a,m)|_{a_0=1}=
q^{\sum_{i=1}^{m-1} a_i-v_0}D_{v^{(0)},(v^{(0)})^+}\big(a^{(0)},m-1\big).
\]
\end{enumerate}

The details of these key steps will be presented in the subsections \ref{sec3}, \ref{sec6} and \ref{sec7}
respectively. Subsection \ref{s-blem} prepares some technical preliminaries needed in Subsection~\ref{sec6}.

\subsection{Polynomiality}\label{sec3}

As mentioned above, the aim of this subsection is to prove that the constant term $D_{v,\lambda}(a,m)$ is a polynomial in $q^{a_0}$ of degree at most $a_1+\cdots+a_n+v_0$.

We begin by recalling \cite[Lemma 2.2]{LXZ}.

\begin{lem}\label{lem1}
Let $L(x_1,\dots,x_n)$ be an arbitrary Laurent polynomial.
Then, for fixed non-negative integers $a_1,\dots,a_n$,
and $t$ an integer not exceeding $a_1+\cdots+a_n$,
\begin{equation}\label{p1}
\CT_x x_0^{t} L(x_1,\dots,x_n) \prod_{0\leq i<j\leq n} (x_i/x_j)_{a_i}(qx_j/x_i)_{a_j}
\end{equation}
is a polynomial in $q^{a_0}$
of degree at most $a_1+\cdots+a_n-t$.
Moreover, if $t>a_1+\cdots+a_n$, then the constant term \eqref{p1} vanishes.
\end{lem}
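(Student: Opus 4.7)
My plan is to isolate the $a_0$- and $x_0$-dependence of the integrand, since neither $L$ nor the $q$-Dyson factors indexed by pairs $1\le i<j\le n$ involve either. Writing $b:=a_1+\cdots+a_n$ and splitting
\[
\prod_{0\le i<j\le n}(x_i/x_j)_{a_i}(qx_j/x_i)_{a_j}=P_0\cdot P,\qquad P_0:=\prod_{j=1}^n(x_0/x_j)_{a_0}(qx_j/x_0)_{a_j},
\]
I would iterate the constant-term operators to obtain
\[
\CT_x x_0^t L P_0 P=\CT_{x_1,\dots,x_n}\bigl(L(x_1,\dots,x_n)\,P\bigr)\cdot[x_0^{-t}]P_0,
\]
reducing the lemma to a statement about the $x_0$-Laurent expansion of $P_0$ and its dependence on $a_0$.

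A support count handles the vanishing claim immediately. The factor $(qx_j/x_0)_{a_j}=\prod_{k=1}^{a_j}(1-q^k x_j/x_0)$ is a Laurent polynomial in $x_0$ with support $[-a_j,0]$ and no dependence on $a_0$, while $(x_0/x_j)_{a_0}$ is a polynomial in $x_0$ with support $[0,a_0]$. Hence the $x_0$-support of $P_0$ lies in $[-b,na_0]$, so $[x_0^{-t}]P_0=0$ whenever $t>b$.

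For the polynomial degree bound I would apply the $q$-binomial theorem
\[
(x_0/x_j)_{a_0}=\sum_{k=0}^{a_0}(-1)^k q^{\binom{k}{2}}\qbinom{a_0}{k}(x_0/x_j)^k
\]
and observe that $\qbinom{a_0}{k}=(q^{a_0-k+1})_k/(q)_k$ is a polynomial in $q^{a_0}$ of degree exactly $k$. Consequently $[x_0^k]\prod_{j=1}^n(x_0/x_j)_{a_0}$ is a polynomial in $q^{a_0}$ of degree at most $k$. Convolving with the $a_0$-free product $\prod_{j=1}^n(qx_j/x_0)_{a_j}$, whose $x_0$-support lies in $[-b,0]$, then shows that $[x_0^{-t}]P_0$ is polynomial in $q^{a_0}$ of degree at most $b-t$, and this bound survives the outer constant-term operation in $x_1,\dots,x_n$ because $LP$ is independent of $a_0$.

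The main bookkeeping obstacle --- and really the only substantive point --- is tracking the $q^{a_0}$-degree through the convolution: a contribution to $[x_0^{-t}]P_0$ comes from a pair $(s,k)$ with $s+k=-t$, $-b\le s\le 0$ and $0\le k$, so $k$ may be as large as $b-t$, while each surviving monomial contributed by $\prod_j(x_0/x_j)_{a_0}$ carries $q^{a_0}$-degree at most its $x_0$-exponent. Once this bound is in hand, the lemma follows by $\CC(q)$-linearity of constant-term extraction.
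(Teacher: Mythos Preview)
The paper does not actually prove this lemma; it is quoted from \cite[Lemma~2.2]{LXZ} without argument. Your proposal is a correct, self-contained proof: isolate all $x_0$- and $a_0$-dependence in $P_0=\prod_{j=1}^n(x_0/x_j)_{a_0}(qx_j/x_0)_{a_j}$, expand each $(x_0/x_j)_{a_0}$ via the $q$-binomial theorem so that the coefficient of $x_0^k$ is a polynomial in $q^{a_0}$ of degree at most $k$, and then convolve with the $a_0$-free factor $\prod_j(qx_j/x_0)_{a_j}$ whose $x_0$-support is $[-b,0]$ to bound the $q^{a_0}$-degree of $[x_0^{-t}]P_0$ by $b-t$.

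One notational slip worth fixing: your displayed identity
\[
\CT_x x_0^t L P_0 P=\CT_{x_1,\dots,x_n}\bigl(L(x_1,\dots,x_n)\,P\bigr)\cdot[x_0^{-t}]P_0
\]
places $[x_0^{-t}]P_0$ outside the constant term in $x_1,\dots,x_n$, but $[x_0^{-t}]P_0$ is still a Laurent polynomial in $x_1,\dots,x_n$ and must stay inside:
\[
\CT_x x_0^t L P_0 P=\CT_{x_1,\dots,x_n}\Bigl(L(x_1,\dots,x_n)\,P\cdot[x_0^{-t}]P_0\Bigr).
\]
Your later remark that ``this bound survives the outer constant-term operation in $x_1,\dots,x_n$ because $LP$ is independent of $a_0$'' shows you have the right picture; only the displayed formula needs the parenthesis moved. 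With that correction the argument is complete.
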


We remark that the correct interpretation of the above lemma is that, for
$t\leq a_1+\cdots+a_n$, there exists a polynomial $P(x)$ of degree at most $a_1+\cdots+a_n-t$ such that,
for all non-negative integers $a_0$,
\[
\CT_x x_0^{t} L(x_1,\dots,x_n) \prod_{0\leq i<j\leq n} (x_i/x_j)_{a_i}(qx_j/x_i)_{a_j}=P(q^{a_0}).
\]

Using Lemma~\ref{lem1} it is not hard to show that the constant term $D_{v,\lambda}(a,m)$ is a polynomial in $q^{a_0}$
for fixed non-negative integers $a_1,\dots,a_n$. This is the content of the next proposition.

\begin{prop}\label{cor-poly}
Let $a_1,\dots,a_n$ be fixed non-negative integers and $D_{v,\lambda}(a,m)$ be defined as in \eqref{D-m}.
If $-v_0\leq a_1+\cdots+a_n$, then $D_{v,\lambda}(a,m)$
is a polynomial in $q^{a_0}$ of degree at most $a_1+\cdots+a_n+v_0$.
If $-v_0>a_1+\cdots+a_n$ then $D_{v,\lambda}(a,m)=0$.
\end{prop}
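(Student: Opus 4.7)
The plan is to isolate the $x_0$- and $a_0$-dependence of $h_\lambda(x_m^{(a)})$ using plethystic addition, and then apply Lemma~\ref{lem1} to each resulting piece.

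Concretely, from \eqref{alphabet-m} I would split the alphabet as $x_m^{(a)} = X_0 + Y$, where
\[
X_0 := c_0\,\frac{1-q^{a_0}}{1-q}\,x_0, \qquad c_0 := q^{-\chi(0<m)},
\]
and $Y := \sum_{i=1}^n \frac{1-q^{a_i}}{1-q}\,x_i\, q^{-\chi(i<m)}$ involves neither $x_0$ nor $a_0$. Iterating \eqref{e-xy} across the factors of $h_\lambda = \prod_i h_{\lambda_i}$ and using the direct computation
\[
h_k[X_0] = (c_0 x_0)^k\,\frac{(q^{a_0})_k}{(q)_k},
\]
which is $x_0^k$ times a polynomial in $q^{a_0}$ of degree exactly $k$, one obtains an expansion of the shape
\[
h_\lambda(x_m^{(a)}) = \sum_{0\leq d\leq j\leq |\lambda|} x_0^{\,j}\,(q^{a_0})^d\, L_{j,d}(x_1,\dots,x_n),
\]
where each $L_{j,d}$ is a Laurent polynomial in $x_1,\dots,x_n$ with no $a_0$-dependence.

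Substituting this expansion into \eqref{D-m} and exchanging the finite sum with the constant term operator gives
\[
D_{v,\lambda}(a,m) = \sum_{0\leq d\leq j\leq |\lambda|} (q^{a_0})^d\,
\CT_x x_0^{\,j-v_0}\, L_{j,d}(x_1,\dots,x_n)
\prod_{0\leq i<k\leq n}(x_i/x_k)_{a_i}(qx_k/x_i)_{a_k}.
\]
Now I would apply Lemma~\ref{lem1} with $t = j - v_0$ and $L = L_{j,d}$ to each inner constant term. When $j - v_0 > a_1+\cdots+a_n$ the constant term vanishes; otherwise it is a polynomial in $q^{a_0}$ of degree at most $a_1+\cdots+a_n + v_0 - j$. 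Multiplying by the prefactor $(q^{a_0})^d$ with $d\leq j$, each surviving summand has $q^{a_0}$-degree at most $j + (a_1+\cdots+a_n + v_0 - j) = a_1+\cdots+a_n + v_0$, which is the claimed bound. If instead $-v_0 > a_1+\cdots+a_n$, then $j - v_0 > a_1+\cdots+a_n$ for every $j\geq 0$, so every inner constant term vanishes and $D_{v,\lambda}(a,m) = 0$.

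The argument is essentially mechanical once the plethystic split is in place. The one point that requires a moment's care is confirming that the Laurent polynomials $L_{j,d}$ are genuinely independent of $a_0$; this is clear because $a_0$ enters $x_m^{(a)}$ solely through the $i=0$ summand of \eqref{alphabet-m}, and that entire summand has been absorbed into $X_0$. No serious obstacle is anticipated: the substantive work is already done in Lemma~\ref{lem1}.
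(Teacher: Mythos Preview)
Your proposal is correct and follows essentially the same route as the paper: both split the alphabet $x_m^{(a)}$ into its $x_0$-part and the remainder $\hat{x}_m^{(a)}$, expand $h_\lambda$ via iterated use of \eqref{e-xy}, observe that $h_k\big[c_0 x_0(1-q^{a_0})/(1-q)\big]=(c_0x_0)^{|k|}(q^{a_0})_{|k|}/(q)_{|k|}$ has $q^{a_0}$-degree $|k|$, and then feed each term into Lemma~\ref{lem1}. The only cosmetic differences are that the paper indexes the sum by the composition $k$ rather than by the pair $(j,d)$, and that in your final display the factor $x_1^{-v_1}\cdots x_n^{-v_n}$ from $x^{-v}$ should be absorbed into the Laurent polynomial passed to Lemma~\ref{lem1}; neither affects the argument.
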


\begin{proof}
We write
\[
x_m^{(a)}=\frac{1-q^{a_0}}{1-q}\, x_0 q^{-\chi(0<m)}+\hat{x}_m^{(a)},
\]
where
\[
\hat{x}_m^{(a)}:=\sum_{i=1}^n \frac{1-q^{a_i}}{1-q}\, x_i q^{-\chi(i<m)}.
\]
Then, by repeated use of \eqref{e-xy} with
\[
X\mapsto \frac{1-q^{a_0}}{1-q}\, x_0 q^{-\chi(0<m)}
\quad\text{and}\quad
Y\mapsto  \hat{x}_m^{(a)}
\]
and the homogeneity of the complete symmetric function,
the constant $D_{v,\lambda}(a,m)$ can be expanded as
\begin{multline}\label{e-extract1}
D_{v,\lambda}(a,m)
=\sum_k q^{-\chi(0<m)|k|}
h_k\Big[\frac{1-q^{a_0}}{1-q}\Big] \\ \times
\CT_x \frac{x_0^{|k|-v_0}}{x_1^{v_1}\cdots x_n^{v_n}}h_{\lambda-k}\big[\hat{x}_m^{(a)}\big]
\prod_{0\leq i<j\leq n} (x_i/x_j)_{a_i}(qx_j/x_i)_{a_j},
\end{multline}
where $k:=(k_0,\dots,k_{\ell(\lambda)-1})$ is a composition and
the sum is over $0\leq k_i\leq \lambda_i$ for $0\leq i\leq \ell(\lambda)-1$.
Note that, generally, $k$ and $\lambda-k$ are compositions, not partitions.
By Lemma~\ref{lem1} the constant term in \eqref{e-extract1} vanishes if $a_1+\cdots+a_n+v_0<|k|$,
and is a polynomial in $q^{a_0}$ of degree at most $a_1+\cdots+a_n+v_0-|k|$ if $a_1+\cdots+a_n+v_0\geq |k|$.
Together with the fact that $h_k[(1-z)/(1-q)]$ is a polynomial in $z$ of degree $|k|$\footnote{It is easily
shown that $h_r[(1-z)/(1-q)]=(z)_r/(q)_r$, see e.g., \cite[page 27]{Mac95}.},
each summand in \eqref{e-extract1} is either a polynomial in $q^{a_0}$ of degree
at most $a_1+\cdots+a_n+v_0$ or is $0$.
Moreover, if $a_1+\cdots+a_n+v_0<0$, then every constant term in
\eqref{e-extract1} vanishes and $D_{v,\lambda}(a,m)=0$.
\end{proof}

\subsection{Preliminaries for the determination of the roots of $D_{v,\lambda}(a,m)$}\label{s-blem}

In this subsection we prepare some general results used in the next section to determine the roots of
$D_{v,\lambda}(a,m)$.

\begin{lem}\label{lem-import}
For $s$ a positive integer, let $(b_{1},\dots,b_{s+1})$ and $(k_1,\dots,k_s)$ be compositions
such that $1\leq k_{i}\leq b_{1}+\dots+b_{s+1}$ for $1\leq i\leq s$.
Then at least one of the following holds:
\begin{enumerate}
\item $1\leq k_{i}\leq b_{i}$ for some $i$ with $1\leq i\leq s$;
\item $-b_{j}\leq k_{i}-k_{j}\leq b_{i}-1$ for some $1\leq i<j\leq s$;
\item there exists a permutation $w\in\mathfrak{S}_s$ and a composition $(t_1,\dots,t_s)$
such that
\begin{equation}\label{e-k1}
k_{w(j)}-k_{w(j-1)}=b_{w(j)}+t_j \quad \text{for $1\leq j\leq s$.}
\end{equation}
Here $k_0=w(0):=0$, the $t_i$ satisfy $\sum_{j=1}^{s}t_j\leq b_{s+1}$ and $t_j>0$
if $w(j-1)<w(j)$ for $1\leq j\leq s$.
\end{enumerate}
\end{lem}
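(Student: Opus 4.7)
The plan is to argue by contrapositive: assume both (1) and (2) fail, and construct the permutation $w$ and composition $(t_1, \ldots, t_s)$ demanded by (3). The failure of (1) gives $k_i \geq b_i + 1$ for every $i$, and the failure of (2) yields a dichotomy: for every $1 \leq i < j \leq s$, either $k_j \geq k_i + b_j + 1$ or $k_i \geq k_j + b_i$. The natural way to exploit this separation is to sort the $k_i$ and read off consecutive gaps.

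Concretely, I would let $w \in \mathfrak{S}_s$ be the permutation satisfying $k_{w(1)} \leq k_{w(2)} \leq \cdots \leq k_{w(s)}$, with ties broken by requiring $w(j-1) > w(j)$ whenever $k_{w(j-1)} = k_{w(j)}$. Setting $k_0 := 0$ and $t_j := k_{w(j)} - k_{w(j-1)} - b_{w(j)}$ for $1 \leq j \leq s$ makes \eqref{e-k1} hold by construction, and a telescoping sum gives
\[
\sum_{j=1}^{s} t_j = k_{w(s)} - (b_1 + \cdots + b_s) \leq b_{s+1},
\]
using $k_{w(s)} \leq b_1 + \cdots + b_{s+1}$. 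The initial gap is handled by the failure of (1), which supplies $t_1 = k_{w(1)} - b_{w(1)} \geq 1$, consistent with the requirement $t_1 > 0$ coming from $w(0) = 0 < w(1)$. For $j \geq 2$, I would verify that $t_j \geq 0$ (and $t_j \geq 1$ when $w(j-1) < w(j)$) by a case split on the relative order of $w(j-1)$ and $w(j)$, applying the failure-of-(2) dichotomy to that pair: in each case, one of the two disjuncts is incompatible with the sorting inequality $k_{w(j-1)} \leq k_{w(j)}$ combined with the tie-breaking rule, leaving the other as the needed bound on $t_j$.

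The delicate point will be the tie-breaking rule. When $k_i = k_j$ with $i < j$, the failure of (2) can only be realized through $k_i \geq k_j + b_i$, which forces $b_i = 0$; the tie-breaking rule must then place $j$ before $i$ in the sorted order so that the corresponding step has $w(j-1) > w(j)$ and permits $t_j = 0$ (since here only $t_j \geq 0$ is required). Ordering ties the other way would demand $t_j > 0$, which is impossible. Once this convention is pinned down so that both subcases of the case split eliminate the spurious disjunct cleanly, the remainder of the proof is a routine verification.
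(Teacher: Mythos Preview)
Your proof is correct and reaches the same permutation $w$ as the paper, but by a different and more elementary route. The paper builds a weighted tournament on $\{1,\dots,s\}$: for each pair $i<j$, the failure of (2) forces a directed edge (either $j\to i$ with weight $b_i$ when $k_i-k_j\ge b_i$, or $i\to j$ with weight $b_j+1$ when $k_j-k_i\ge b_j+1$), and then argues via edge-weight inequalities that any cycle would have simultaneously nonpositive and positive total weight, so the tournament is acyclic and hence transitive; its unique Hamilton path $w(1)\to\cdots\to w(s)$ is the permutation. Your sorting construction produces exactly this $w$ (your tie-breaking rule matches the tournament's edge direction when $k_i=k_j$), but you bypass the graph-theoretic detour entirely, checking the bounds on each $t_j$ directly from the dichotomy applied to the consecutive pair $\{w(j-1),w(j)\}$. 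Your argument is shorter and needs no auxiliary combinatorial structure; the paper's has the minor advantage of explaining structurally why $w$ is in fact uniquely determined (as the linear extension of a total order), rather than merely exhibited.
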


When $b_{s+1}=0$ case (3) can not occur.
Indeed, if (3) were to hold with $b_{s+1}=0$ this would imply $\sum_{j=1}^s t_j\leq 0$,
contradicting the fact that $t_1>0$.
This special case of the lemma corresponds to \cite[Lemma 4.2]{gess-xin2006} by Gessel and Xin.
If (3) holds with $b_{s+1}=1$, then $t_1=1$ and $t_j=0$ for $2\leq j\leq s$.
Hence $w(j-1)>w(j)$ for $2\leq j\leq s$ so that $w=(s,\dots,2,1)$ and
$k_i=1+\sum_{j=i}^s b_j$ for all $i$.
This special case of the lemma appeared previously as \cite[Lemma 3.2]{LXZ}.
We finally remark that (1) and (3) can not hold simultaneously.
If (3) were to hold, then by \eqref{e-k1} we have $k_{w(j)}\geq b_{w(j)}+1$ for all $j$, contradicting to (1). Also, it is not hard to show that (2) and (3) can not hold simultaneously.
\begin{proof}
We prove the lemma by showing that if (1) and (2) fail then (3) must hold.

Assume that (1) and (2) are both false.
Then we construct a weighted tournament $T$ on the complete graph
on $s$ vertices, labelled $1,\dots,s$, as follows.
For the edge $(i,j)$ with $1\leq i<j\leq s$ we draw an arrow
from $j$ to $i$ and attach a weight $b_i$ if $k_i-k_j\ge b_i$.
If, on the other hand, $k_i-k_j\le -b_j-1$ then we draw an arrow
from $i$ to $j$ and attach the weight $b_j+1$.
Note that the weight of each edge of a tournament is non-negative.

We call a directed edge from $i$ to $j$ ascending if $i<j$.
It is immediate from our construction that
(i) the weight of the edge $i\to j$ is less than or equal $k_j-k_i$, and
(ii) the weight of an ascending edge is positive.

We will use (i) and (ii) to show that any of the above-constructed
tournaments is acyclic and hence transitive.
As consequence of (i), the weight of a directed path from $i$ to $j$ in $T$,
defined as the sum of the weights of its edges, is at most $k_j-k_i$.
Proceeding by contradiction, assume that $T$ contains a cycle $C$.
By the above, the weight of $C$ must be non-positive, and hence $0$.
Since $C$ must have at least one ascending edge, which by (ii) has positive
weight, the weight of $C$ is positive, a contradiction.

Since each $T$ is transitive, there is exactly one directed Hamilton path $P$ in $T$,
corresponding to a total order of the vertices.
Assume $P$ is given by
\[
P=w(1)\rightarrow w(2)\rightarrow\cdots\rightarrow w(s-1)\rightarrow w(s),
\]
where we have suppressed the edge weights.
Then $k_{w(s)}-k_{w(1)}\ge b_{w(2)}+\dots+b_{w(s)}$, and thus
\begin{align}\label{e-contradiction}
k_{w(s)}&\ge k_{w(1)}+b_{w(2)}+\dots+b_{w(s)}\nonumber\\
    & \ge b_{w(1)}+1+b_{w(2)}+\dots+b_{w(s)}\nonumber\\
    &=b_1+\dots+b_s+1.
\end{align}
Together with the assumption that $k_{w(s)}\leq b_1+\dots+b_{s+1}$
this implies that $P$ has at most $b_{s+1}-1$ ascending edges.
Let $(t_1,\dots,t_s)$ be a composition such that \eqref{e-k1} holds.
When $j=1$ this gives $k_{w(1)}=b_{w(1)}+t_1$.
Since (1) does not hold, $k_{w(1)}\geq b_{w(1)}+1$, so that $t_1>0$.
For $2\leq j\leq s$, if $w(j-1)\to w(j)$ is an ascending edge, then $t_j$ is a positive integer.
That is, for $2\leq j\leq s$ if $w(j-1)<w(j)$ then $t_j>0$.
Since
\begin{align*}
\sum_{j=1}^s (k_{w(j)}-k_{w(j-1)})=k_{w(s)}
&=b_{w(1)}+\dots+b_{w(s)}+t_1+\dots+t_s \\
&=b_1+\dots+b_s+t_1+\dots+t_s
\leq b_1+\dots+b_s+b_{s+1},
\end{align*}
we have $t_1+\dots+t_s\leq b_{s+1}$.
This completes the proof of the assertion that (3) must hold if
both (1) and (2) fail.
\end{proof}

Our next proposition concerns alphabets of the form $x_m^{(a)}$ as defined in \eqref{alphabet-m}.

\begin{prop}\label{lem-subs}
For $s$ a positive integer,
let $(b_1,\dots,b_{s+1})$ and $(k_1,\dots,k_s)$ be compositions such that $1\leq k_i\leq b_1+\dots+b_{s+1}$ for $1\leq i\leq s$.
If the $k_i$ are such that (3) of Lemma~\ref{lem-import} holds, then for $m$ a non-negative integer
\begin{equation}\label{e-subs}
-\sum_{i=0}^s \frac{1-q^{b_i}}{1-q}\, x_iq^{-\chi(i<m)}
\bigg|_{\substack{b_0=-\sum_{i=1}^{s+1} b_i, \\[1pt] x_i=q^{k_s-k_i},\,0\leq i\leq s}}
=q^{n_1}+\dots+q^{n_{b_{s+1}}},
\end{equation}
where $\{n_1,\dots,n_{b_{s+1}}\}$ is a set of integers determined by $m$ and the $b_i$ and $k_j$.
\end{prop}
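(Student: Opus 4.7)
The plan is to expand the left-hand side of \eqref{e-subs} as a signed sum of powers of $q$ and argue that after cancellation exactly $b_{s+1}$ monomials remain. Set $N:=b_1+\cdots+b_{s+1}$. The substitution $b_0=-N$ gives $(1-q^{b_0})/(1-q)=-q^{-N}(1+q+\cdots+q^{N-1})$, while for $i\geq 1$ we have $(1-q^{b_i})/(1-q)=1+q+\cdots+q^{b_i-1}$. Combined with $x_i=q^{k_s-k_i}$ (with $k_0:=0$), the left-hand side of \eqref{e-subs} rewrites as
\[
\sum_{j\in S_0}q^j-\sum_{i=1}^s\sum_{j\in S_i}q^j,
\]
where
\[
S_0:=\{k_s-N-\chi(0<m),\,\ldots,\,k_s-1-\chi(0<m)\}
\]
is an interval of $N$ consecutive integers, and
\[
S_i:=\{k_s-k_i-\chi(i<m),\,\ldots,\,k_s-k_i+b_i-1-\chi(i<m)\},\qquad 1\leq i\leq s,
\]
is an interval of $b_i$ consecutive integers.

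Because $|S_0|-\sum_{i=1}^s|S_i|=N-(b_1+\cdots+b_s)=b_{s+1}$, the proposition reduces to the two claims (i) $S_i\subseteq S_0$ for each $1\leq i\leq s$, and (ii) the $S_i$ are pairwise disjoint; the desired exponents $n_1,\ldots,n_{b_{s+1}}$ are then the elements of $S_0\setminus\bigcup_{i=1}^s S_i$. Both claims rest on the key inequality $k_i\geq b_i+1$, which I would establish by telescoping along the permutation $w$ of condition (3) of Lemma~\ref{lem-import}: for $i=w(1)$, the equation $k_{w(1)}=b_{w(1)}+t_1$ with $t_1\geq 1$ (forced by $w(0)=0<w(1)$) gives $k_{w(1)}\geq b_{w(1)}+1$; for $j\geq 2$, the recursion $k_{w(j)}=k_{w(j-1)}+b_{w(j)}+t_j$, together with $k_{w(j-1)}\geq 1$ and $t_j\geq 0$, yields $k_{w(j)}\geq b_{w(j)}+1$. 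Claim (i) then follows from elementary arithmetic: $S_i\subseteq S_0$ is equivalent to $b_i+\chi(0<m)-\chi(i<m)\leq k_i\leq N+\chi(0<m)-\chi(i<m)$, and since $\chi(0<m)-\chi(i<m)\in\{0,1\}$ for $i\geq 1$, both inequalities are implied by the bounds $b_i+1\leq k_i\leq N$.

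The main obstacle is claim (ii). Writing $\alpha_i:=k_i+\chi(i<m)$, one has $S_i=[k_s-\alpha_i,\,k_s-\alpha_i+b_i-1]$, so (assuming $b_j\geq 1$, else $S_j$ is empty) $S_i\cap S_j=\emptyset$ whenever $\alpha_j\geq\alpha_i$ is equivalent to $\alpha_j-\alpha_i\geq b_j$. For $i=w(l)$ and $j=w(l')$ with $l<l'$, condition (3) yields $k_j-k_i=\sum_{r=l+1}^{l'}(b_{w(r)}+t_r)\geq b_j+t_{l'}$, and hence
\[
\alpha_j-\alpha_i\geq b_j+t_{l'}+\chi(j<m)-\chi(i<m).
\]
The delicate case is $\chi(i<m)-\chi(j<m)=1$, i.e., $i<m\leq j$, which forces $i<j$ as integers; thus $w(l)<w(l')$, and condition (3) then guarantees $t_{l'}\geq 1$, precisely absorbing the $-1$. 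In every other case $t_{l'}+\chi(j<m)-\chi(i<m)\geq 0$ is automatic, and (ii) follows. The positivity $\alpha_j\geq\alpha_i$ itself is forced by $k_j-k_i\geq b_j\geq 1$ and $\chi(j<m)-\chi(i<m)\geq -1$. Together (i) and (ii) complete the proof.
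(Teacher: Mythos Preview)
Your interval-counting approach is a genuinely different (and arguably cleaner) route from the paper's. The paper reorders the sum along the permutation $w$ and telescopes it into $s+1$ blocks of the form $q^{c}(1-q^{d})/(1-q)$, then verifies that each exponent $d$ is a non-negative integer and that the total $\sum d$ equals $b_{s+1}$. Your argument instead realises the positive contribution as one long interval $S_0$ of length $N$ and the negative contributions as shorter intervals $S_1,\dots,S_s$ of lengths $b_1,\dots,b_s$, reducing the proposition to the inclusion $S_i\subseteq S_0$ and the pairwise disjointness of the $S_i$. A bonus of your approach is that it yields distinctness of the exponents $n_1,\dots,n_{b_{s+1}}$ for free, something the paper's proof does not explicitly establish (though it is not needed downstream).

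There is, however, a genuine gap in your disjointness argument. In the delicate case $i<m\le j$ you correctly deduce $w(l)<w(l')$, but then claim ``condition (3) then guarantees $t_{l'}\ge 1$''. Condition (3) only gives $t_{l'}>0$ when $w(l'-1)<w(l')$; for $l'>l+1$ this is not the same as $w(l)<w(l')$. Concretely, take $s=3$, $w=(1,3,2)$, $m=2$, and consider $i=w(1)=1$, $j=w(3)=2$: here $i<m\le j$ yet $w(2)=3>2=w(3)$, so $t_3$ may be zero and your bound $\alpha_j-\alpha_i\ge b_j+t_{l'}-1$ does not close. The fix is immediate: keep the full sum and use
\[
k_j-k_i=\sum_{r=l+1}^{l'}\bigl(b_{w(r)}+t_r\bigr)\ge b_{w(l')}+\sum_{r=l+1}^{l'}t_r.
\]
Since $w(l)<w(l')$, the subsequence $w(l),w(l+1),\dots,w(l')$ cannot be strictly decreasing, so there exists some $r\in\{l+1,\dots,l'\}$ with $w(r-1)<w(r)$; condition (3) then forces $t_r\ge 1$ and hence $\sum_{r=l+1}^{l'}t_r\ge 1$, giving $k_j-k_i\ge b_j+1$ and thus $\alpha_j-\alpha_i\ge b_j$ as required. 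With this correction your proof goes through.
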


We remark that the set $\{n_1,\dots,n_{b_{s+1}}\}$ can be explicitly determined.
However, since the precise values of the $n_i$ play no role in the following, we
have omitted them from the above statement.
Indeed, the important fact about the right-hand side is that, viewed as an
alphabet, has cardinality $b_{s+1}$.

\begin{proof}
Denote the left-hand side of \eqref{e-subs} by $L$.
Carrying out the substitutions
\[
b_0\mapsto -\sum_{i=1}^{s+1} b_i\quad\text{and}\quad  x_i\mapsto q^{k_s-k_i} \text{ for $0\leq i\leq s$}
\]
in
\[
-\sum_{i=0}^s \frac{1-q^{b_i}}{1-q}\, x_iq^{-\chi(i<m)},
\]
we obtain
\begin{align*}\label{s1}
L&=
-\frac{q^{k_s}}{1-q}\bigg(\big(1-q^{-\sum_{i=1}^{s+1} b_i}\big)
q^{-\chi(0<m)}+\sum_{i=1}^s(1-q^{b_i})q^{-k_i-\chi(i<m)}\bigg)\nonumber \\
&=-\frac{q^{k_s}}{1-q}\bigg(\big(1-q^{-\sum_{i=1}^{s+1} b_i}\big)q^{-\chi(0<m)}+
\sum_{i=1}^s(1-q^{b_{w(i)}})q^{-k_{w(i)}-\chi(w(i)<m)}\bigg),
\end{align*}
where $w\in \mathfrak{S}_s$ is any permutation such that \eqref{e-k1} holds.
By summing that equation over $j$ from $1$ to $i$, we find
\[
k_{w(i)}=\sum_{j=1}^i (b_{w(j)}+t_j) \quad \text{for $1\leq i\leq s$.}
\]
Hence
\[
L=-\frac{q^{k_s}}{1-q}\bigg( \big(1-q^{-\sum_{i=1}^{s+1} b_i}\big)q^{-\chi(0<m)}+
\sum_{i=1}^s(1-q^{b_{w(i)}})
q^{-\sum_{j=1}^i (b_{w(j)}+t_j)-\chi(w(i)<m)}\bigg).
\]
By rearranging the terms in the above expression this may be written as
\begin{multline*}
L=\frac{q^{k_s}}{1-q}\bigg(q^{-\sum_{i=1}^{s+1} b_i-\chi(0<m)}
\Big(1-q^{b_{s+1}-\sum_{j=1}^st_j+\chi(0<m)-\chi(w(s)<m)}\Big) \\
+\sum_{i=1}^s q^{b_{w(i)}-\sum_{j=1}^i (b_{w(j)}+t_j)-\chi(w(i)<m)}
\Big(1-q^{t_{i}-\chi(w(i-1)<m)+\chi(w(i)<m)}\Big)\bigg).
\end{multline*}
Next we will show that
\begin{subequations}
\begin{equation}\label{s4}
b_{s+1}-\sum_{j=1}^s t_j+\chi(0<m)-\chi(w(s)<m)\in\mathbb{N}
\end{equation}
and
\begin{equation}\label{s3}
t_i-\chi(w(i-1)<m)+\chi(w(i)<m)\in\mathbb{N} \quad\text{for $1\leq i\leq s$},
\end{equation}
\end{subequations}
where $\mathbb{N}=\{0,1,2,\dots\}$.
Since $w(s)>0$, it is clear that $\chi(0<m)\geq \chi(w(s)<m)$.
Together with the condition $\sum_{j=1}^{s}t_j\leq b_{s+1}$, this implies that \eqref{s4} holds.
To also show that \eqref{s3} holds, it suffices to show that
if $\chi(w(i)<m)=0$ and $\chi(w(i-1)<m)=1$ for some $i$, then $t_i$ is a positive integer.
If $\chi(w(i)<m)=0$ and $\chi(w(i-1)<m)=1$, then $w(i)\geq m$ and $w(i-1)<m$ respectively.
Hence $w(i-1)<w(i)$.
It follows that $t_i>0$ by the conditions on the $t_i$ in item (3) of Lemma~\ref{lem-import}.
Consequently, \eqref{s3} holds as well.
Since \eqref{s4} and \eqref{s3} hold,
and $(1-q^n)/(1-q)=1+\dots+q^{n-1}$ for $n\in\mathbb{N}$, we may conclude
that $L=q^{n_1}+\dots+q^{n_p}$
where $p$ is given by
\begin{align*}
p&=b_{s+1}-\sum_{j=1}^st_j+\chi(0<m)-\chi(w(s)<m)
+\sum_{i=1}^s \Big(t_i-\chi(w(i-1)<m)+\chi(w(i)<m)\Big) \\
&=b_{s+1},
\end{align*}
completing the proof.
\end{proof}

\subsection{Determination of the roots of $D_{v,v^+}(a,m)$}\label{sec6}

In this subsection, we will determine all the roots of $D_{v,v^+}(a,m)$ if $v$ is a composition and $v_0=\max\{v\}$ has multiplicity one in $v$.
More precisely, $D_{v,v^+}(a,m)$ vanishes for $-a_0\in \{0,1,\dots,a_1+\cdots+a_n+v_0-1\}$.

Since $D_{v,\lambda}(a,m)$ is a polynomial in $q^{a_0}$ by Proposition~\ref{cor-poly}, we can extend the definition of $a_0$ to all integers. In this subsection, we are concerned with $D_{v,v^+}(a,m)$ for $a_0$ a non-positive integer. Thus, for simplicity we denote the Laurent series of $D_{v,v^+}(a,m)$ by $Q(-a_0)$ if $a_0$ is a non-positive integer.
That is, for $d$ a non-negative integer
\begin{multline}\label{qh}
Q(d)=x^{-v}h_{v^+}\bigg[\frac{1-q^{-d}}{1-q}x_0q^{-\chi(0<m)}+\sum_{i=1}^n
\frac{1-q^{a_i}}{1-q}x_iq^{-\chi(i<m)}\bigg] \\
\times \prod_{i=1}^n \frac{(qx_i/x_0)_{a_i}}{(q^{-d}x_0/x_i)_d}
\prod_{1\leq i<j\leq n} (x_i/x_j)_{a_i}(qx_j/x_i)_{a_j},
\end{multline}
and
\[
\CT_{x} Q(d)=D_{v,v^+}(a,m)|_{a_0=-d}.
\]
Therefore, instead of determining the roots of $D_{v,v^+}(a,m)$,
we prove
\[
\CT_x Q(d)=0\quad \text{for $d\in \{0,1,\dots,a_1+\cdots+a_n+v_0-1\}.$}
\]
Here and in the following of this subsection, we assume that $v$ is a composition, $v_0=\max\{v\}$ has multiplicity one in $v$, and $m\in\{0,1,\dots,n+1\}$,
unless specified otherwise. Furthermore, we assume $n$ is a positive integer, since the $n=0$ case for $Q(d)$ is trivial.

We begin by showing that $\CT_x Q(0)=0$.
Since $v_0$ is the unique largest part of $v$, it is a positive integer.
By a degree consideration in $x_0$ of $Q(0)$,
it is easy to see that $\CT_x Q(0)=0$.
In the remainder of this subsection,
we will prove $\CT_x Q(d)=0$ for
$d\in \{1,\dots,a_1+\cdots+a_n+v_0-1\}$ by combining the Gessel--Xin method with plethystic substitutions.
The main process of the Gessel--Xin method is to recursively apply Lemma~\ref{lem-almostprop} to a rational function of the form \eqref{e-defF} to extract the constant term in one variable each time,
until eliminating all the variables of the rational function.

To apply Lemma~\ref{lem-almostprop} to $Q(d)$,
we need to show that $Q(d)$ is of the form \eqref{e-defF} with respect to $x_0$.
The denominator of $Q(d)$ --- $\prod_{i=1}^n (q^{-d}x_0/x_i)_d$ ---
is of the form
\[
\prod_{r=1}^{nd} (1-x_0/c_r x_{i_r}),
\]
which has degree $nd$ in $x_0$.
Here $c_1,\dots,c_{nd}\in K\setminus \{0\}$ satisfy $c_r\neq c_s$ if $x_{i_r}=x_{i_s}$.
To get the degree in $x_0$ of the numerator of $Q(d)$,
we need the next result.
\begin{prop}\label{prop-hr}
Let $d$ and $r$ be non-negative integers, and $\{n_1,\dots,n_d\}$ be a set of integers.
For $z$ an arbitrary letter and $Y$ an alphabet independent of $z$,
\begin{equation}\label{form-h}
h_r\big[{-}(q^{n_1}+\cdots+q^{n_d})z+Y\big]
\end{equation}
is a polynomial in $z$ of degree at most $\min\{r,d\}$.
In particular, if $Y=0$ and $d<r$ then \eqref{form-h} vanishes.
\end{prop}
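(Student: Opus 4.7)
The proof is a direct application of the two plethystic identities in Proposition~\ref{Ple-basic} together with the vanishing property \eqref{e-vanish} of elementary symmetric functions on small alphabets. The plan is to expand $h_r[-(q^{n_1}+\cdots+q^{n_d})z+Y]$ into a sum over compositions of $r$, carry out the plethystic substitutions, and then read off the degree in $z$.

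First I would write $Z := (q^{n_1}+\cdots+q^{n_d})z$, so that $Z$ is an alphabet of cardinality $d$ (assuming, as is harmless, that the $n_i$ are distinct; the general case follows by continuity in the variables, or simply by treating the $q^{n_i}z$ as formal commuting indeterminates). Applying \eqref{e-xy} with $X\mapsto -Z$ gives
\[
h_r[-Z+Y]=\sum_{i=0}^r h_i[-Z]\,h_{r-i}[Y],
\]
and then \eqref{e-he} converts each $h_i[-Z]$ into $(-1)^i e_i[Z]$. Since $e_i$ is homogeneous of degree $i$, we may pull out $z^i$ to obtain
\[
h_r\bigl[-(q^{n_1}+\cdots+q^{n_d})z+Y\bigr]
=\sum_{i=0}^r (-1)^i z^i\, e_i[q^{n_1}+\cdots+q^{n_d}]\,h_{r-i}[Y].
\]

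Now I would invoke \eqref{e-vanish}: since the alphabet $q^{n_1}+\cdots+q^{n_d}$ has cardinality $d$, the coefficient $e_i[q^{n_1}+\cdots+q^{n_d}]$ vanishes whenever $i>d$. Hence the displayed sum truncates to $0\leq i\leq \min\{r,d\}$, which manifestly exhibits the expression as a polynomial in $z$ of degree at most $\min\{r,d\}$. This proves the first assertion.

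For the second assertion, suppose $Y=0$. Then $h_{r-i}[Y]=h_{r-i}[0]$ equals $1$ if $r=i$ and $0$ otherwise, so only the $i=r$ term in the sum above survives, leaving
\[
h_r\bigl[-(q^{n_1}+\cdots+q^{n_d})z\bigr]=(-1)^r z^r\, e_r[q^{n_1}+\cdots+q^{n_d}].
\]
When $d<r$, this is zero by a second application of \eqref{e-vanish}. There is no real obstacle here; the result is essentially a bookkeeping exercise with the identities of Proposition~\ref{Ple-basic}, and the only point that requires care is verifying that the alphabet $q^{n_1}+\cdots+q^{n_d}$ is treated as having cardinality $d$ so that \eqref{e-vanish} applies in the stated range.
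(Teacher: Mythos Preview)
Your proof is correct and follows essentially the same route as the paper: expand via \eqref{e-xy}, convert $h_i[-Z]$ to $(-1)^i e_i[Z]$ using \eqref{e-he}, extract $z^i$ by homogeneity, and truncate the sum using \eqref{e-vanish}. The parenthetical remark about distinctness of the $n_i$ is unnecessary (the statement already takes $\{n_1,\dots,n_d\}$ to be a set, and in any case $e_i$ of a sum of $d$ letters vanishes for $i>d$ regardless of coincidences), but it does no harm.
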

\begin{proof}
By \eqref{e-xy} we can expand \eqref{form-h} as
\[
\sum_{i=0}^rz^ih_i\big[{-}(q^{n_1}+\cdots+q^{n_d})\big]h_{r-i}[Y].
\]
By \eqref{e-he} it becomes
\[
\sum_{i=0}^r(-z)^ie_i\big[q^{n_1}+\cdots+q^{n_d}\big]h_{r-i}[Y].
\]
Since $e_i\big[q^{n_1}+\cdots+q^{n_d}\big]=0$ for $i>d$ by \eqref{e-vanish},
the above sum reduces to
\[
\sum_{i=0}^{\min\{r,d\}}(-z)^ie_i\big[q^{n_1}+\cdots+q^{n_d}\big]h_{r-i}[Y],
\]
which is a polynomial in $z$ of degree at most $\min\{r,d\}$.

If $Y=0$ then \eqref{form-h} becomes
\[
h_r\big[{-}(q^{n_1}+\cdots+q^{n_d})z\big]
=(-z)^re_r\big[q^{n_1}+\cdots+q^{n_d}\big].
\]
The above equation holds by \eqref{e-he}. It vanishes for $d<r$ by \eqref{e-vanish}.
\end{proof}
For $d$ a positive integer
\[
\frac{1-q^{-d}}{1-q}
=-(q^{-1}+q^{-2}+\cdots+q^{-d}).
\]
Thus, for $r$ a non-negative integer
\[
h_r\bigg[\frac{1-q^{-d}}{1-q}x_0q^{-\chi(0<m)}+\sum_{i=1}^n
\frac{1-q^{a_i}}{1-q}x_iq^{-\chi(i<m)}\bigg]
\]
is of the form \eqref{form-h} with
\[
x_0\mapsto z, \quad
\sum_{i=1}^n\frac{1-q^{a_i}}{1-q}x_iq^{-\chi(i<m)}\mapsto Y,\quad
n_i=-i-\chi(0<m)\ \text{for $i=1,\dots,d$}.
\]
By Proposition~\ref{prop-hr} it is a polynomial in $x_0$ of degree at most $\min\{r,d\}$.
It follows that the Laurent polynomial in $x_0$ of the numerator of $Q(d)$
\[
x^{-v}h_{v^+}\bigg[\frac{1-q^{-d}}{1-q}x_0q^{-\chi(0<m)}+\sum_{i=1}^n
\frac{1-q^{a_i}}{1-q}x_iq^{-\chi(i<m)}\bigg]
\prod_{i=1}^n (qx_i/x_0)_{a_i} \cdot D
\]
has degree
\[
\sum_{i=0}^{n}\min \{v_i,d\}-v_0
\]
in $x_0$.
Here
\[
D:=\prod_{1\leq i<j\leq n} (x_i/x_j)_{a_i}(qx_j/x_i)_{a_j}
\]
is independent of $x_0$.
For $n$ a positive integer the degree
\[
\sum_{i=0}^{n}\min \{v_i,d\}-v_0\leq v_1-v_0+nd<nd.
\]
The last inequality holds because $v_1<v_0$ by the fact that $v_0$ is the unique largest part of $v$.
The above inequality shows that the degree in $x_0$ of the numerator of $Q(d)$ --- $\sum_{i=0}^{n}\min \{v_i,d\}-v_0$ --- is strictly less than $nd$,
the degree in $x_0$ of the denominator of $Q(d)$.
Therefore, $Q(d)$ is of the form \eqref{e-defF}.
Then, by applying Lemma \ref{lem-almostprop} to $Q(d)$ with respect to $x_0$ we obtain
\begin{equation}\label{qd}
\CT_{x_0}Q(d)=\sum_{\substack{0<u_1\leq n,\\ 1\leq k_1\leq d}}Q(d\Mid u_1;k_1),
\end{equation}
where
\[
Q(d\Mid u_1;k_1)=Q(d)\Big(1-\frac{x_0}{x_{u_1}q^{k_1}}\Big)\bigg|_{x_0=x_{u_1}q^{k_1}}.
\]

For each term in \eqref{qd} we  extract the constant term in $x_{u_1}$, and then perform further constant term extractions,
eliminating one variable at each step.
In order to keep track of the
terms we obtain, we introduce some notation from
\cite{gess-xin2006}.

Let $f$ be a rational function of $x_0,\dots,x_n$. For $s$ a positive integer, let ${k}=(k_1,\dots,k_s)$ and ${u}=(u_1,\dots,u_s)$ be compositions such that
$0<u_1<\cdots<u_s\leq n$. Define $E_{u,k}f$ to be
the result of replacing $x_{u_i}$ in $f$ with
$x_{u_s}q^{k_s-k_i}$ for $i=0,\dots,s-1$, where
$u_0=k_0:=0$. Then, for $d$ a positive integer and
$0<k_i\leq d$,
\begin{equation}\label{qh3}
Q(d\Mid u;k):=Q(d\Mid u_1,\dots,u_s;k_1,\dots,k_s)=
E_{u,k}\bigg(Q(d)\prod_{i=1}^{s}\Big(1-\frac{x_0}{x_{u_i}q^{k_i}}\Big)\bigg).
\end{equation}
Note that the product on the right-hand side of \eqref{qh3} cancels
all the factors in the denominator of $Q(d)$ that would be taken to
zero by $E_{u,k}$.

\begin{lem}\label{lem-lead1}
Let $v$ be a composition such that $v_0$ is its unique largest part, $a_1,\dots,a_n$ be non-negative integers and $m\in\{0,1,\dots,n+1\}$.
For $d\in \{1,\dots,a_1+\cdots+a_n+v_0-1\}$, the rational functions $Q(d\Mid u;k)$
defined as in \eqref{qh3} have the following properties:
\begin{itemize}
\item[(i)] If $1\leq k_i\leq a_{u_1}+\cdots+a_{u_s}$ for all $i$ with
$1\leq i\leq s\leq n$,
then $Q(d\Mid u;k)=0$.
\item[(ii)] If $k_i>a_{u_1}+\cdots+a_{u_s}$ for some $i\in \{1,\dots,s\}$,
then
\begin{equation}\label{qh4}
\CT_{x_{u_s}}Q(d\Mid u;k)=
\begin{cases}
\displaystyle \sum_{\substack{u_s<u_{s+1}\leq n,\\
1\leq k_{s+1}\leq d}}
Q(d\Mid u_1,\dots,u_s,u_{s+1};k_1,\dots,k_s,k_{s+1})\ &\text{for $u_s<n$;}\\
0 &\text{for $u_s=n$}.
\end{cases}
\end{equation}
\end{itemize}
In particular,
\begin{equation}\label{e-Qn}
\CT_{x_n} Q(d\Mid 1,\dots,n;k_1,\dots,k_n)=0.
\end{equation}
\end{lem}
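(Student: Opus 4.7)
The plan is to prove (i) directly from Lemma~\ref{lem-import} (with $b_{s+1}=0$), and to prove (ii) by applying Lemma~\ref{lem-almostprop} after verifying a degree bound via Propositions~\ref{lem-subs} and~\ref{prop-hr}. The special case \eqref{e-Qn} will follow from (ii) since $u_s=n$ leaves no room for $u_{s+1}$.

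For (i), invoke Lemma~\ref{lem-import} with $b_i = a_{u_i}$ for $1 \le i \le s$ and $b_{s+1}=0$. By the remark following that lemma, case (3) cannot hold, so case (1) or case (2) must. In case (1) some $k_i$ satisfies $1 \le k_i \le a_{u_i}$; under $E_{u,k}$ (which sends $x_0 \mapsto x_{u_s}q^{k_s}$ and $x_{u_l} \mapsto x_{u_s}q^{k_s-k_l}$ for $0<l<s$) the factor $(qx_{u_i}/x_0)_{a_{u_i}}$ becomes $(q^{1-k_i})_{a_{u_i}}$, which contains the zero factor $1-q^0$. In case (2) some $i<j$ satisfies $-a_{u_j}\le k_i-k_j\le a_{u_i}-1$; then under $E_{u,k}$ either $(x_{u_i}/x_{u_j})_{a_{u_i}} \mapsto (q^{k_j-k_i})_{a_{u_i}}$ or $(qx_{u_j}/x_{u_i})_{a_{u_j}} \mapsto (q^{1+k_i-k_j})_{a_{u_j}}$ contains $1-q^0$. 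In either case $Q(d\Mid u;k)=0$.

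For (ii), set $b_i = a_{u_i}$ and $b_{s+1} = d-(a_{u_1}+\cdots+a_{u_s}) \ge 1$, and again apply Lemma~\ref{lem-import}. If its case (1) or (2) holds, the argument above gives $Q(d\Mid u;k)=0$; moreover, the vanishing $q$-Pochhammer factor persists under the further substitution $E_{u\cup\{u_{s+1}\},k\cup\{k_{s+1}\}}$, so every summand on the right of \eqref{qh4} also vanishes and the identity reduces to $0=0$. In case (3), apply Lemma~\ref{lem-almostprop} to $Q(d\Mid u;k)$ with $x_k=x_{u_s}$. The denominator in $x_{u_s}$ comes from $\prod_{i\notin\{u_1,\ldots,u_s\}}(q^{k_s-d}x_{u_s}/x_i)_d$, contributing $(n-s)d$ simple poles. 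Proposition~\ref{lem-subs}, specialised via $b_0 \mapsto -d$ and $x_i \mapsto q^{k_s-k_i}$, identifies the coefficient of $x_{u_s}$ in the alphabet inside $h_{v^+}$ (after $E_{u,k}$) as $-(q^{n_1}+\cdots+q^{n_{b_{s+1}}})$, an expression of cardinality $b_{s+1}$; Proposition~\ref{prop-hr} then bounds the $x_{u_s}$-degree of each factor $h_{(v^+)_j}$ by $\min\{(v^+)_j, b_{s+1}\}$. Combining these bounds with the degree contributions of $x^{-v}$ and of the remaining $(qx_i/x_0)_{a_i}$ and $(x_i/x_j)_{a_i}(qx_j/x_i)_{a_j}$ factors shows that the numerator in $x_{u_s}$ has degree strictly less than $(n-s)d$, so Lemma~\ref{lem-almostprop} applies.

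Lemma~\ref{lem-almostprop} then expresses $\CT_{x_{u_s}}Q(d\Mid u;k)$ as a sum of residues at the poles $x_{u_s}=x_{u_{s+1}}q^{k_{s+1}-k_s}$ with $u_s<u_{s+1}\le n$ (the condition $i_r>k$ of Lemma~\ref{lem-almostprop} excluding $u_{s+1}\le u_s$) and $k_{s+1}\in\{1,\ldots,d\}$ (indexing the $d$ poles of each $(q^{k_s-d}x_{u_s}/x_{u_{s+1}})_d$). A direct comparison, unravelling the residue as the effect of the further substitution $E_{u\cup\{u_{s+1}\},k\cup\{k_{s+1}\}}$ together with the extra cancellation factor $(1-x_0/(x_{u_{s+1}}q^{k_{s+1}}))$, shows each residue equals $Q(d\Mid u_1,\ldots,u_{s+1};k_1,\ldots,k_{s+1})$ as defined in \eqref{qh3}, establishing \eqref{qh4}; when $u_s=n$ the sum is empty, yielding \eqref{e-Qn}. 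The main obstacle is the degree count: one has to track every $x_{u_s}$-dependent factor arising after $E_{u,k}$ and exploit Proposition~\ref{lem-subs} to make Proposition~\ref{prop-hr}'s bound sharp enough to keep the numerator degree strictly below $(n-s)d$.
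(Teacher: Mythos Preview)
Your approach matches the paper's closely: part (i) via Lemma~\ref{lem-import} with $b_{s+1}=0$; part (ii) via the same lemma with $b_{s+1}=d-\sum_i a_{u_i}>0$ followed by Lemma~\ref{lem-almostprop}, the degree bound supplied through Propositions~\ref{lem-subs} and~\ref{prop-hr}; and the identification of residues with $Q(d\Mid u';k')$ through the composition rule $T_{u_{s+1},k_{s+1}}\circ E_{u,k}=E_{u',k'}$.

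There is, however, a genuine gap in your treatment of the case $s=n$, which is precisely what \eqref{e-Qn} asserts. When $s=n$ every index lies in $\{u_0,u_1,\dots,u_s\}=\{0,1,\dots,n\}$, so after $E_{u,k}$ the denominator of $Q(d\Mid u;k)$ carries no $x_n$-dependent factors---your own count gives $(n-s)d=0$ poles. Lemma~\ref{lem-almostprop} requires at least one denominator factor ($m\ge 1$), so ``the sum is empty'' is not a conclusion you can draw from it here. In fact, by homogeneity $Q(d\Mid 1,\dots,n;k)$ is a \emph{constant} in $x_n$, and $\CT_{x_n}$ of a constant returns that constant, not automatically zero. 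The paper handles $s=n$ by a direct vanishing argument: with all indices substituted, the auxiliary alphabet $Y$ in the application of Proposition~\ref{lem-subs} is empty, so the argument of $h_{v^+}$ reduces to $-(q^{n_1}+\cdots+q^{n_b})x_n$ with $b=d-(a_1+\cdots+a_n)$. This is the \emph{only} place the upper bound $d\le a_1+\cdots+a_n+v_0-1$ in the hypothesis is invoked: it forces $b<v_0$, and then the second clause of Proposition~\ref{prop-hr} (the $Y=0$ case) gives $h_{v_0}\big[{-}(q^{n_1}+\cdots+q^{n_b})x_n\big]=0$, whence $Q(d\Mid 1,\dots,n;k)=0$ identically. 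Your degree bound already contains the seed of this (the contribution of the $h_{v_0}$ factor is capped at $\min\{v_0,b\}=b$), but you must upgrade it to an outright vanishing rather than feed it into Lemma~\ref{lem-almostprop}.
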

\begin{proof}
[Proof of \rm{(i)}]
Taking $b_i\mapsto a_{u_i}$ for $i=1,\dots,s$ and $b_{s+1}=0$ in Lemma~\ref{lem-import}, we have the following result.
If $1\leq k_i\leq a_{u_1}+\cdots+a_{u_s}$ for $i=1,\dots,s$,
then either
$1\le k_i\le a_{u_i}$ for some $i$, or $-a_{u_j}\le
k_i-k_j\le a_{u_i}-1$ for some $i<j$. If $1\le k_i\le a_{u_i}$ for some $i$,
then $Q(d\Mid u;k)$ has the factor
$$E_{u,k}
\Big((qx_{u_i}/x_{0})_{a_{u_i}} \Big)
=\lrq{x_{u_s}q^{k_s-k_i}}{x_{u_s}q^{k_s}}{q}_{\!\!a_{u_i}}
=(q^{1-k_i})_{a_{u_i}}=0.$$

If $-a_{u_j}\le k_i-k_j\le a_{u_i}-1$ for some $i<j$, then $Q(d\Mid u;k)$ has
the factor
$$E_{u,k} \Big((x_{u_i}/x_{u_j})_{a_{u_i}}(qx_{u_j}/x_{u_i})_{a_{u_j}}\Big)
=E_{u,k} \Big(q^{\binom{a_{u_j}+1}{2}} (-x_{u_j}/x_{u_i})^{a_{u_j}}
(q^{-a_{u_j}}x_{u_i}/{x_{u_j}})_{a_{u_i}+a_{u_j}}
\Big),$$
which is equal to
$$q^{\binom{a_{u_j}+1}{2}}
(-q^{k_i-k_j})^{a_{u_j}}(q^{k_j-k_i-a_{u_j}})_{a_{u_i}+a_{u_j}}=0.
$$
\smallskip
\noindent\emph{Proof of \rm{(ii)}.}
Note that since $d\ge k_i$
for all $i$, the hypothesis implies that
$d>a_{u_1}+\cdots+a_{u_s}$.
Let
\begin{equation}\label{defi-b}
d=\sum_{i=1}^sa_{u_i}+b
\end{equation}
for a positive integer $b$. Then $1\leq k_i\leq \sum_{i=1}^sa_{u_i}+b$
for all $i=1,\dots,s$.
If we take $b_i\mapsto a_{u_i}$ for $1\leq i\leq s$ and $b_{s+1}\mapsto b$ in Lemma~\ref{lem-import}, then at least one of the following three cases holds:
\begin{enumerate}
\item $1\leq k_{i}\leq a_{u_i}$ for some $i$ with $1\leq i\leq s$;
\item $-a_{u_j}\leq k_{i}-k_{j}\leq a_{u_i}-1$ for some $1\leq i<j\leq s$;
\item there exists a permutation $w\in\mathfrak{S}_s$ and a composition $(t_1,\dots,t_s)$
such that
\[
k_{w(j)}-k_{w(j-1)}=a_{u_{w(j)}}+t_j \quad \text{for $1\leq j\leq s$.}
\]
Here $k_0=w(0):=0$, the $t_i$ satisfy $\sum_{j=1}^{s}t_j\leq b$ and $t_j>0$
if $w(j-1)<w(j)$ for $1\leq j\leq s$.
\end{enumerate}
If either (1) or (2) holds, then $Q(d\Mid u;k)=0$ for $1\leq s\leq n$ by the same argument as that in part (i). In addition, \eqref{qh4} holds if the $k_i$ satisfy (1) or (2) since both sides vanish.
It remains to show that \eqref{qh4} holds if the $k_i$ satisfy (3).
We discuss this according to the following three cases:
(a) $s=n$; (b) $1\leq s<n$ and $u_s=n$; (c) $1\leq s<n$ and $u_s<n$.

If $s=n$, then $u_i=i$ for $i=1,\dots,n$. In this case, we prove \eqref{qh4} by showing that
\begin{equation}\label{qh5}
\CT_{x_n} Q(d\Mid 1,\dots,n;k_1,\dots,k_n)=0
\end{equation}
for the $k_i$ satisfy (3) and $d>a_1+\cdots+a_n$.
By \eqref{defi-b} if $s=n$ then
$b=d-a_1-\cdots-a_n$.
Together with $d<a_1+\cdots+a_n+v_0$ yields $b<v_0$ for $s=n$.
If the $k_i$ satisfy (3), then by Proposition~\ref{lem-subs} with
$s\mapsto n, b_i\mapsto a_i$ for $i=1,\dots,n$ and $b_{n+1}\mapsto b$,
\[
x^{(a)}_m\Big|_{\substack{a_0=-\sum_{i=1}^na_i-b, \\ x_{i}=x_{n}q^{k_n-k_i},0\leq i\leq n}}
\]
is of the form
\[
-(q^{n_1}+\cdots+q^{n_b})x_n.
\]
Here $\{n_1,\dots,n_b\}$ is a set of integers.
It follows that
\begin{equation}\label{factor}
h_{v_0}\big(x^{(a)}_m\big)\Big|_{\substack{a_0=-\sum_{i=1}^na_i-b, \\ x_{i}=x_{n}q^{k_n-k_i},0\leq i\leq n}}
\end{equation}
is of the form
\[
h_{v_0}\big[{-x_n}(q^{n_1}+\cdots+q^{n_b})\big],
\]
which vanishes for $b<v_0$ by Proposition~\ref{prop-hr}.
Therefore, $Q(d\Mid 1,\dots,n;k_1,\dots,k_n)=0$ because it has \eqref{factor} as a factor.  Consequently, \eqref{qh5} holds.

To prove \eqref{qh4} for $1\leq s<n$ and the $k_i$ satisfy (3), we need Proposition~\ref{prop-rationalform}.
It shows that $Q(d\Mid u;k)$ is a rational function of the form \eqref{e-defF} with respect to $x_{u_s}$. Then we can apply Lemma~\ref{lem-almostprop} to eliminate the variable $x_{u_s}$ in $Q(d\Mid u;k)$.

If $1\leq s<n$ and $u_s=n$, then applying Lemma~\ref{lem-almostprop} yields
$\CT_{x_n}Q(d\Mid u;k)=0$, since there is no variable in $Q(d\Mid u;k)$ with index larger than $n$. Therefore, \eqref{qh4} holds for this case.

For $1\leq s<n$ and $u_s<n$, \eqref{qh4} holds if we can show that
\begin{equation}\label{qh6}
\CT_{x_{u_s}}Q(d\Mid u;k)=
\displaystyle \sum_{\substack{u_s<u_{s+1}\leq n,\\
1\leq k_{s+1}\leq d}}
Q(d\Mid u_1,\dots,u_s,u_{s+1};k_1,\dots,k_s,k_{s+1}).
\end{equation}
For any rational function $F$ of $x_{u_s}$ and integers $j$ and $z$, let
$T_{j,z} F$ be the result of replacing $x_{u_s}$ with
$x_{j}q^{z-k_s}$ in $F$. Since $Q(d\Mid u;k)$ is a rational function of the form \eqref{e-defF} with respect to $x_{u_s}$ by Proposition~\ref{prop-rationalform}, applying Lemma \ref{lem-almostprop} gives
\begin{equation}\label{e-TQ}
\CT_{x_{u_s}}Q(d\Mid u;k)=\sum_{\substack{u_s < u_{s+1}\le n \\ 1\le
k_{{s+1}}\le d}} T_{u_{s+1},k_{s+1}} \bigg(Q(d\Mid u;k)
\Big(1-\frac{x_{u_s}q^{k_s}}{x_{u_{s+1}}q^{k_{s+1}}}\Big)\bigg).
\end{equation}
To prove \eqref{qh6}, it suffices to show that
\[
Q(d\Mid u';k')
=T_{u_{s+1},k_{s+1}} \bigg(Q(d\Mid u;k)
\Big(1-\frac{x_{u_s}q^{k_s}}{x_{u_{s+1}}q^{k_{s+1}}}\Big)\bigg),
\]
where $u'=(u_1,\dots,
u_s, u_{s+1})$ and $k'=(k_1,\dots, k_s, k_{s+1})$.
The equality follows easily from the identity
\begin{equation}
\label{e-TE} T_{u_{s+1},k_{s+1}}\circ E_{u,k}= E_{u',k'}.
\end{equation}
To see that \eqref{e-TE} holds, we have
\[
(T_{u_{s+1},k_{s+1}}\circ E_{u,k})\, x_{u_i}
  =T_{u_{s+1},k_{s+1}}\, \Big( x_{u_s}q^{k_s-k_i}\Big)
  = x_{u_{s+1}}q^{k_{s+1}-k_i}= E_{u',k'}\,  x_{u_i},
\]
and if $j\notin \{u_0,u_1,\dots,u_s\}$ then $(T_{u_{s+1},k_{s+1}}\circ
E_{u,k})\, x_{j}=x_j= E_{u',k'}\, x_{j}$.
\end{proof}
We complete the proof of Lemma~\ref{lem-lead1} by proving the next proposition.
\begin{prop}\label{prop-rationalform}
Let $a_1,\dots,a_n$ be non-negative integers.
Let $s,d,u,k$ and $Q(d\Mid u;k)$ be defined as in \eqref{qh3}
such that $1\leq s<n$ and $d>a_{u_1}+\cdots+a_{u_s}$.
If the $k_i$ satisfy (3)
in the proof of Lemma~\ref{lem-lead1}, then
$Q(d\Mid u;k)$ is a rational function of the form \eqref{e-defF} with respect to $x_{u_s}$.
\end{prop}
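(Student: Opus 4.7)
The plan is to apply the substitution $E_{u,k}$ together with the correction product in \eqref{qh3} to each factor of $Q(d)$, and then verify two things: that the surviving $x_{u_s}$-denominator has the form required by Lemma~\ref{lem-almostprop}, and that the accompanying numerator is a Laurent polynomial in $x_{u_s}$ whose top degree is at most one less than the number of denominator factors.

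Set $N:=\{1,\dots,n\}\setminus\{u_1,\dots,u_s\}$. Among the factors $(q^{-d}x_0/x_i)_d$ in the denominator of $Q(d)$, each one with $i=u_j$ contains precisely one sub-factor that vanishes under $E_{u,k}$, namely $1-x_0/(x_{u_j}q^{k_j})$, which is cancelled by the correction product in \eqref{qh3}; the remaining sub-factors become non-zero constants. Hence the $x_{u_s}$-denominator of $Q(d\Mid u;k)$ is $\prod_{i\in N}(q^{-d+k_s}x_{u_s}/x_i)_d$, consisting of $m:=d(n-s)$ factors of the form $1-x_{u_s}/c_r x_{i_r}$. For each fixed $i\in N$ the resulting $c_r$ are distinct powers of $q$, and for distinct $i$ the variables $x_{i_r}$ themselves differ, so the distinctness requirement of Lemma~\ref{lem-almostprop} is satisfied.

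Next set $b:=d-\sum_{p=1}^s a_{u_p}$, a positive integer by hypothesis~(3). Applying Proposition~\ref{lem-subs} with $b_i=a_{u_i}$ for $1\le i\le s$ and $b_{s+1}=b$ identifies the coefficient of $x_{u_s}$ in the plethystic alphabet inside $h_{v^+}$ after $E_{u,k}$ as $-(q^{n_1}+\cdots+q^{n_b})$, with the rest of the alphabet independent of $x_{u_s}$. Proposition~\ref{prop-hr}, applied factor by factor to $h_{v^+}=\prod_i h_{v_i^+}$, then bounds its $x_{u_s}$-degree by $\sum_{i=0}^n \min\{v_i,b\}$. The monomial $x^{-v}$ after $E_{u,k}$ contributes $-v_0-\sum_{p=1}^s v_{u_p}$ to the top $x_{u_s}$-degree; each factor $(qx_i/x_0)_{a_i}$ with $i\in N$ becomes $\prod_l(1-q^{l+1-k_s}x_i/x_{u_s})$ of top degree $0$; and each cross-pair $(x_i/x_j)_{a_i}(qx_j/x_i)_{a_j}$ having exactly one index in $\{u_1,\dots,u_s\}$ contributes $a_{u_p}$ (where $u_p$ is the $S$-index of the pair), giving a total of $(d-b)(n-s)$. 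Collecting these contributions, the required bound $\le m-1=d(n-s)-1$ on the top degree of the numerator reduces to the inequality
\[
\sum_{i=0}^n \min\{v_i,b\} \le v_0+\sum_{p=1}^s v_{u_p}+b(n-s)-1.
\]

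Closing this final inequality is the main obstacle, and it is precisely where the hypothesis that $v_0$ is the unique maximum of $v$ is used. I would split into two cases. If $b\ge v_0$, then $\min\{v_i,b\}=v_i$ for every $i$, and the inequality reduces to $\sum_{i\in N}v_i\le b(n-s)-1$; this follows from $v_i\le v_0-1\le b-1$ for $i\ne 0$ together with $n-s\ge 1$. If $b<v_0$, then $\min\{v_0,b\}=b$ already saves $v_0-b\ge 1$ relative to the trivial bounds $\min\{v_{u_p},b\}\le v_{u_p}$ and $\min\{v_i,b\}\le b$ for $i\in N$, giving the desired estimate. In both cases the strict inequality $v_i\le v_0-1$ for $i\ne 0$ is the essential input, completing the verification that $Q(d\Mid u;k)$ has the form \eqref{e-defF}.
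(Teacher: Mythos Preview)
Your proof is correct and follows essentially the same route as the paper's: write $Q(d\Mid u;k)=N/D$, identify the surviving $x_{u_s}$-denominator as $\prod_{i\notin U}(q^{-d+k_s}x_{u_s}/x_i)_d$ of degree $(n-s)d$, use Proposition~\ref{lem-subs} and Proposition~\ref{prop-hr} to bound the $x_{u_s}$-degree of the numerator, and conclude that the difference is negative. The only cosmetic difference is in the final inequality: the paper sorts $v$ into $\lambda=v^+$ and bounds the degree defect in one line by $TD\le \lambda_{n-s}-\lambda_0<0$, whereas you split into the cases $b\ge v_0$ and $b<v_0$; both arguments exploit the same hypothesis that $v_0$ is the unique maximum.
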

\begin{proof}
Write $Q(d\Mid u;k)$ as $N/D$, in which $N$ (the ``numerator") is
\begin{multline*}
E_{u,k}\, \Bigg(h_{v^+}\bigg[\frac{1-q^{-d}}{1-q}x_0q^{-\chi(0<m)}
+\sum_{i=1}^n\frac{1-q^{a_i}}{1-q}x_iq^{-\chi(i<m)}\bigg] \\
\times x^{-v}\prod_{j=1}^n(qx_j/x_0)_{a_j}
\prod_{\substack{1\le i, j\le n\\ j\neq i}}
(q^{\chi(i>j)}x_i/x_j)_{a_i}\Bigg),
\end{multline*}
and
$D$ (the ``denominator") is
$$E_{u,k}\, \bigg(\prod_{j=1}^n (q^{-d}x_0/x_j)_{d}\biggm/\prod_{i=1}^s(1-q^{-k_i}
x_0/x_{u_i})\bigg).$$

Since $d>a_{u_1}+\cdots+a_{u_s}$, let $d=a_{u_1}+\cdots+a_{u_s}+b$ for a positive integer $b$.
Notice that
\begin{equation}\label{hv}
E_{u,k}\Bigg(h_{v^+}\bigg[\frac{1-q^{-d}}{1-q}x_0q^{-\chi(0<m)}
+\sum_{i=1}^n\frac{1-q^{a_i}}{1-q}x_iq^{-\chi(i<m)}\bigg]\Bigg)
\end{equation}
can be written as
\[
h_{v^+}(x_m^{(a)})\Big|_{\substack{a_0=-d=-\sum_{i=1}^sa_{u_i}-b, \\ x_{u_i}=x_{u_s}q^{k_s-k_i},0\leq i\leq s}}.
\]
It is of the form
\[
h_{v^+}\big[{-x_{u_s}}(q^{n_1}+\cdots+q^{n_b})+Y\big]
\]
by Proposition~\ref{lem-subs} with $b_i\mapsto a_{u_i}$ for $i=1,\dots,s$ and $b_{s+1}\mapsto b$ if the $k_i$ satisfy (3) in the proof of Lemma~\ref{lem-lead1}. Here $\{n_1,\dots,n_b\}$ is a set of integers and $Y=\sum_{i\notin U} (1-q^{a_i})x_iq^{-\chi(i<m)}/(1-q)$ is an alphabet independent of $x_{u_s}$, where $U:=\{u_0,u_1,\dots,u_s\}$ and $u_0=0$.
Thus, \eqref{hv} is a polynomial in $x_{u_s}$ of degree at most $\sum_{i=0}^{n}\min \{v_i,b\}$ by Proposition~\ref{prop-hr}.
It follows that the parts of $N$ contributing to the degree in
$x_{u_s}$,
\begin{multline*}
E_{u,k}\Bigg(h_{v^+}\bigg[\frac{1-q^{-d}}{1-q}x_0q^{-\chi(0<m)}
+\sum_{i=1}^n\frac{1-q^{a_i}}{1-q}x_iq^{-\chi(i<m)}\bigg]\\
\times\prod_{i\in U}x_i^{-v_i}\prod_{i=1}^s \prod_{j\notin U}
(q^{\chi(u_i>j)}x_{u_i}/x_j)_{a_{u_i}}\Bigg),
\end{multline*}
has degree at most
\[
\sum_{i=0}^{n}\min \{v_i,b\}-\sum_{i\in U}v_i+(n-s)(a_{u_1}+\cdots+a_{u_s}).
\]
The parts of $D$ contributing to the degree in $x_{u_s}$ are
\[
E_{u,k}\bigg(\prod_{j\notin U}(q^{-d}x_0/x_j)_{d}\bigg),
\]
which has degree $(n-s)d$.
Let $TD$ be the difference between the degrees in $x_{u_s}$ of $N$ and $D$.
\begin{align*}
TD:&=\sum_{i=0}^{n}\min \{v_i,b\}-\sum_{i\in U}v_i+(n-s)(a_{u_1}+\cdots+a_{u_s}-d)
\nonumber \\
&=\sum_{i=0}^{n}\min \{v_i,b\}-\sum_{i\in U}v_i-(n-s)b.
\end{align*}
Denote $\lambda=(\lambda_0,\dots,\lambda_n)=v^+$. Since $v_0$ is the unique largest part of $v$, $\lambda_0=v_0>\lambda_i$ for $i=1,\dots,n$. For $1\leq s<n$
\[
TD\leq \sum_{i=n-s}^{n}\min\{\lambda_i,b\}-\lambda_0-\sum_{i=n-s+1}^n \lambda_{i}
\leq \sum_{i=n-s}^{n}\lambda_i-\lambda_0-\sum_{i=n-s+1}^n \lambda_{i}
=\lambda_{n-s}-\lambda_0<0.
\]
Consequently, $Q(d\Mid u;k)$ is a rational function of the form \eqref{e-defF} with respect to $x_{u_s}$.
\end{proof}

Now we are ready to determine the roots of $D_{v,v^+}(a,m)$.
\begin{lem}\label{lem-vanishing}
Let $(a_1,\dots,a_n)$ and $v=(v_0,\dots,v_n)$ be compositions such that $v_0=\max\{v\}$ has multiplicity one in $v$.
For $-a_0\in \{0,1,\dots,a_1+\cdots+a_n+v_0-1\}$
and $m\in \{0,1,\dots,n+1\}$,
\begin{equation}
D_{v,v^+}(a,m)=0.
\end{equation}
\end{lem}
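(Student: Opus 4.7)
The plan is to combine polynomiality from Proposition~\ref{cor-poly} with the iterated Laurent-series realization $Q(d)$ of \eqref{qh} to reduce the lemma to showing $\CT_x Q(d) = 0$ for every integer $d$ in $\{0, 1, \ldots, a_1+\cdots+a_n+v_0-1\}$, and then to drive each such constant term to zero by iterated application of Lemma~\ref{lem-lead1}.

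The base case $d=0$ follows from a degree count in $x_0$. When $a_0 = 0$ the $x_0$-summand in the plethystic argument of $h_{v^+}$ vanishes, the denominator $\prod_{i=1}^n(q^{-d}x_0/x_i)_d$ becomes an empty product, and each factor of $\prod_{i=1}^n(qx_i/x_0)_{a_i}$ contributes only non-positive powers of $x_0$. Together with the prefactor $x^{-v}$, every monomial of $Q(0)$ carries an $x_0$-exponent at most $-v_0 < 0$, so $\CT_{x_0} Q(0) = 0$ and hence $\CT_x Q(0) = 0$.

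For $d \geq 1$, I would write $\CT_x Q(d) = \CT_{x_n} \cdots \CT_{x_1} \CT_{x_0} Q(d)$ using commutativity of constant-term operators in the field of iterated Laurent series. The degree analysis already established in this subsection shows $Q(d)$ has the rational form required by Lemma~\ref{lem-almostprop} with respect to $x_0$, yielding the expansion \eqref{qd}, $\CT_{x_0}Q(d)=\sum_{u_1,k_1} Q(d\Mid u_1;k_1)$. For each summand I would successively extract the constant term in $x_{u_1}, x_{u_2}, \ldots$, commuting past the remaining extractions. At each stage Lemma~\ref{lem-lead1}(i) kills any term $Q(d\Mid u_1,\dots,u_s;k_1,\dots,k_s)$ whose indices satisfy $1\le k_i\le a_{u_1}+\cdots+a_{u_s}$ for every $i$; otherwise part (ii) applies, returning either $0$ (when $u_s=n$) or a new sum in which the previous indices are extended by an additional pair $(u_{s+1},k_{s+1})$ with $u_{s+1}>u_s$.

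Because the sequence $u_1<u_2<\cdots$ is strictly increasing and bounded above by $n$, at most $n$ such iterations can occur before a given branch either collapses through Lemma~\ref{lem-lead1}(i) or terminates by reaching $u_s=n$ and collapsing through Lemma~\ref{lem-lead1}(ii). Every branch therefore contributes $0$, giving $\CT_x Q(d)=0$ for all $d$ in the claimed range. The main obstacle is really just bookkeeping: one must verify that Lemma~\ref{lem-almostprop} applies legitimately at each inductive step, but Proposition~\ref{prop-rationalform} guarantees the required degree inequality throughout the recursion, so no further technical input is needed.
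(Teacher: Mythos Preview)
Your proposal is correct and follows essentially the same route as the paper's proof: the $d=0$ case is handled by the same degree argument in $x_0$, and for $d\ge 1$ the paper likewise iterates Lemma~\ref{lem-lead1} (formally as a downward induction on $n-s$ applied to $\CT_x Q(d\Mid u;k)$, which is the same branching process you describe), with Proposition~\ref{prop-rationalform} supplying the degree check at each step.
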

Note that $D_{v,v^+}(a,m)$ is a polynomial in $q^{a_0}$ of degree at most $a_1+\cdots+a_n+v_0$ for fixed non-negative integers $a_1,\dots,a_n$ by Proposition~\ref{cor-poly}.
Assuming the conditions of Lemma \ref{lem-vanishing},
for $D_{v,v^+}(a,m)$ viewed as a polynomial in $q^{a_0}$,
we find all its roots.
\begin{proof}
Since $\displaystyle\CT_x Q(-a_0)=D_{v,v^+}(a,m)$, we prove the lemma by showing that
\[
\CT_x Q(d)=0
\]
for $d\in \{0,\dots,a_1+\cdots+a_n+v_0-1\}$
under the assumptions of this lemma.
We have shown that $\displaystyle\CT_x Q(0)=0$.

We prove by induction on $n-s$ that
\[
\CT_x Q(d\Mid u;k) = 0\ \ \mbox{for}\ d\in \{1,\dots,a_1+\cdots+a_n+v_0-1\}.
\]
When $s=0$ this is what we need.
Note that taking constant term with respect to a variable that
does not appear has no effect.
We may assume that $s\le n$ and $0<u_1<\cdots<u_s\le n$, since
otherwise $Q(d\Mid u;k)$ is not defined.
If $s=n$ then $u_i=i$
for $i=1,2,\dots,n$. Thus,
\[
\CT_x Q(d\Mid u_1,\dots,u_n;k_1,\dots,k_n)
=\CT_x Q(d\Mid 1,\dots,n; k_{1},
\dots,k_{n})=0
\]
for $d\in \{1,\dots,a_1+\cdots+a_n+v_0-1\}$ by \eqref{e-Qn}.

Now suppose $0\le s<n$.  If part (i) of Lemma
\ref{lem-lead1} applies, then $Q(d\Mid u;k)=0$. Otherwise, part (ii) of Lemma \ref{lem-lead1} applies and \eqref{qh4} holds. Therefore, applying $\CT$ to both
sides of \eqref{qh4} gives
\[
\CT_x(d\Mid u;k)
=
\begin{cases}
\displaystyle\sum_{\substack{u_s < u_{s+1}\le n
\\ 1\le k_{{s+1}}\le d}}
\CT_x Q(d\Mid u_1,\dots, u_s, u_{s+1};k_1,\dots,k_s,k_{s+1}) &\text{for $u_s<n$,} \\
0 &\text{for $u_s=n$.}
\end{cases}
\]
By induction, every term in the above sum is zero, and so is the sum.
\end{proof}

Note that we can obtain a more general result by the similar argument as that about $D_{v,v^+}(a,m)$ in this subsection:
Let $\lambda=(\lambda_0,\lambda_1,\dots)$ be a partition and $v=(v_0,v_1,\dots,v_n)$ be a composition such that $\lambda\geq v^+$, $v_0=\max\{v\}$ and $\lambda_0>\max\{v_i\Mid i=1,\dots,n\}$. Then
\begin{equation}\label{e-generalorth}
D_{v,\lambda}(a,m)=0 \quad \text{for $-a_0\in \{0,\dots,a_1+\cdots+a_n+\lambda_0-1\}$.}
\end{equation}
Furthermore, for $D_{v,\lambda}(a,m)$ viewed as a polynomial in $q^{a_0}$,
if $\lambda_0>v_0$ then the number of roots exceeds its degree.
It follows that the polynomial $D_{v,\lambda}(a,m)$ is identically zero. Together with \eqref{relation-gamma}, we can conclude that $D_{v,\lambda}(a,m)\equiv 0$
for a partition $\lambda$ and a composition $v$ such that $\lambda\geq v^+$ and $\lambda_0>\max\{v\}$.
This contains the vanishing part of Conjecture~\ref{conj-Kadell} as a special case.
For $v\in \mathbb{Z}^{n+1}$, the argument about $D_{v,\lambda}(a,m)$ in this subsection is no longer valid in general.
For these cases, Cai obtained an orthogonality result,
see Proposition~\ref{prop-Cai} in the next subsection.

\subsection{The value of $D_{v,v^+}(a,m)$ at $a_0=1$}\label{sec7}

To determine $D_{v,v^+}(a,m)$, the last step is to obtain its one non-vanishing value at an additional point. In this subsection, we characterize the value of
$D_{v,v^+}(a,m)$ at $a_0=1$, and complete the proof of Theorem \ref{thm-2}.
We need a few results first.

By \eqref{relation-gamma}, it is easy to see that Theorem \ref{thm-Cai} by Cai is equivalent to the next result.
\begin{prop}\label{prop-Cai}
Let $v\in\mathbb{Z}^{n+1}$ and $\lambda$
a partition such that $|v|=|\lambda|$.
If $D_{v,\lambda}(a,m)$ is non-vanishing, then
$v^{+}\geq \lambda$.
\end{prop}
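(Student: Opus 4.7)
The proposition is asserted in the excerpt to be equivalent to Theorem~\ref{thm-Cai} via the relation \eqref{relation-gamma}, so my plan is to derive the equivalence explicitly from the cyclic identity \eqref{cycle} together with the reduction \eqref{reduce}, and then invoke Cai's theorem.

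First I would dispatch the boundary case $m=n+1$ directly by \eqref{reduce}: since $D_{v,\lambda}(a,n+1)=q^{-|\lambda|}D_{v,\lambda}(a,0)=q^{-|\lambda|}D_{v,\lambda}(a)$, non-vanishing for $m=n+1$ is equivalent to non-vanishing for the usual $D_{v,\lambda}(a)$, and Theorem~\ref{thm-Cai} gives $v^+\geq\lambda$ at once. So I may assume $0\leq m\leq n$.

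For $0\leq m\leq n$ I would apply \eqref{cycle} with the specific choice $k=m$. This falls in the first branch ($m\leq k$) and yields
\[
D_{v,\lambda}(a,m)=q^{v_m+\cdots+v_n}\,D_{\gamma^{-(n+1-m)}(v),\,\lambda}\bigl(\gamma^{m-n-1}(a),\,n+1\bigr).
\]
Applying \eqref{reduce} on the right, this becomes
\[
D_{v,\lambda}(a,m)=q^{v_m+\cdots+v_n-|\lambda|}\,D_{\gamma^{-(n+1-m)}(v),\,\lambda}\bigl(\gamma^{m-n-1}(a)\bigr).
\]
Since the prefactor is a nonzero power of $q$, the left-hand side is non-vanishing if and only if $D_{v',\lambda}(a')\neq 0$, where $v':=\gamma^{-(n+1-m)}(v)$ and $a':=\gamma^{m-n-1}(a)$.

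Now the key observation is that $v'$ is a cyclic rearrangement of the entries of $v$, so the weakly-decreasing sort is preserved: $(v')^+=v^+$. By Theorem~\ref{thm-Cai} applied to the non-vanishing constant term $D_{v',\lambda}(a')$, we conclude $(v')^+\geq\lambda$, hence $v^+\geq\lambda$ in dominance order, as required. There is really no obstacle here beyond bookkeeping: the content of Proposition~\ref{prop-Cai} is already in Theorem~\ref{thm-Cai}, and the role of \eqref{cycle} and \eqref{reduce} is merely to convert between the $m=0$ and arbitrary-$m$ versions without disturbing either $v^+$ or $\lambda$.
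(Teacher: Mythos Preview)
Your proof is correct and follows exactly the route the paper indicates: the paper merely asserts that Proposition~\ref{prop-Cai} is equivalent to Theorem~\ref{thm-Cai} ``by \eqref{relation-gamma}'', and you have spelled out this equivalence using \eqref{cycle} (which is derived from \eqref{relation-gamma}) together with \eqref{reduce}, observing that cyclic rearrangement preserves $v^+$. There is nothing to add.
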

Note that we have a way to avoid using Proposition~\ref{prop-Cai} hinted by Cai's result in this paper, but the method is too complicated to present here.

By Proposition \ref{prop-Cai}, we show that the following constant terms vanish.
\begin{lem}\label{lem-hvanish}
Let $(a_1,\dots,a_n)$ and $(v_1,\dots,v_n)$ be compositions.
For $r$ an integer such that $r>\max\{v_i\Mid i=1,\dots,n\}+1$,
\begin{equation}\label{e-hvanish}
\CT_x \frac{h_r[\hat{x}_m^{(a)}]}{x_0^rx_1^{v_1}\cdots x_n^{v_n}}\prod_{i=1}^{n}h_{v_i}[x_0/q+\hat{x}_m^{(a)}](1-x_0/x_i)(qx_i/x_0)_{a_i}
\prod_{1\leq i<j\leq n} (x_i/x_j)_{a_i}(qx_j/x_i)_{a_j}=0,
\end{equation}
where
\[
\hat{x}_m^{(a)}:=\sum_{i=1}^n \frac{1-q^{a_i}}{1-q}\, x_i q^{-\chi(i<m)},
\]
and $m\in \{1,2,\dots,n+1\}$.
\end{lem}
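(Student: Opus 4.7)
The plan is to expand the integrand of \eqref{e-hvanish} so as to recognize it as a linear combination of constant terms of type $D_{w,\mu}(a'',m)$ with $a''=(0,a_1,\dots,a_n)$, and then to show that all summands vanish---most by Proposition~\ref{prop-Cai} and the last remaining one by a direct $x_0$-degree count.

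First I would apply the two combinatorial identities
\[
\prod_{i=1}^{n}h_{v_i}[x_0/q+\hat{x}_m^{(a)}] = \sum_{0\le k_i\le v_i} q^{-\sum_i k_i}\, x_0^{\sum_i k_i}\prod_{i=1}^{n}h_{v_i-k_i}[\hat{x}_m^{(a)}]
\]
and $\prod_{i=1}^{n}(1-x_0/x_i)=\sum_{S\subseteq\{1,\dots,n\}}(-1)^{|S|}x_0^{|S|}\prod_{i\in S}x_i^{-1}$, together with the product rule $h_r[\hat{x}_m^{(a)}]\prod_i h_{v_i-k_i}[\hat{x}_m^{(a)}]=h_{\mu_k}[\hat{x}_m^{(a)}]$ where $\mu_k:=(r,v_1-k_1,\dots,v_n-k_n)^+$. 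Observing that $\hat{x}_m^{(a)}=x_m^{(a'')}$ (since $a''_0=0$ kills the $x_0$-block) and that $\prod_{j=1}^{n}(qx_j/x_0)_{a_j}\prod_{1\le i<j\le n}(x_i/x_j)_{a_i}(qx_j/x_i)_{a_j}$ is precisely the Dyson factor attached to $a''$, the left-hand side of \eqref{e-hvanish} rewrites as
\[
\sum_{k,S} q^{-\sum_i k_i}(-1)^{|S|}\, D_{w(k,S),\,\mu_k}(a'',m),
\]
with $w(k,S):=\bigl(r-\sum_i k_i-|S|,\ v_1+\chi(1\in S),\dots,v_n+\chi(n\in S)\bigr)$.

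Next I would apply Proposition~\ref{prop-Cai} to each summand. The largest part of $\mu_k$ is always $r$, since $r>\max v_i\ge \max(v_i-k_i)$. Meanwhile, for every $(k,S)\ne(0,\emptyset)$, both $r-\sum_i k_i-|S|\le r-1$ and $v_i+\chi(i\in S)\le\max v_i+1<r$ (using the hypothesis $r>\max v_i+1$) hold, so the largest part of $w(k,S)$ is strictly less than $r$. Consequently $w(k,S)^+\not\ge\mu_k$ already at the first level, and the summand vanishes by Proposition~\ref{prop-Cai}. Only the term with $(k,S)=(0,\emptyset)$ possibly survives; it equals $D_{(r,v_1,\dots,v_n),(r,v_1,\dots,v_n)^+}(a'',m)$. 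This last term I would dispatch by a direct $x_0$-degree argument at $a''_0=0$: the only $x_0$-dependent factors in the integrand are $x_0^{-r}$ and $\prod_{j=1}^n(qx_j/x_0)_{a_j}$ (since $h_{(r,v_1,\dots,v_n)^+}[\hat{x}_m^{(a)}]$ and the mutual Dyson factors $\prod_{1\le i<j\le n}(\cdot)$ contain no $x_0$), and the $x_0$-degrees of the latter lie in $[-\sum_j a_j,0]$; multiplied by $x_0^{-r}$ they are confined to $[-r-\sum_j a_j,-r]$, which excludes $0$ since $r\ge 1$. Hence $\CT_{x_0}$, and therefore $\CT_x$, of the integrand is zero.

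The main obstacle is finding the right pair of expansions in the first step so that the resulting sum assembles into $D$'s with the augmented composition $a''=(0,a_1,\dots,a_n)$ and Proposition~\ref{prop-Cai} simultaneously kills every summand but one; once this combinatorial structure has been recognized, both the Cai argument and the closing degree count are essentially automatic.
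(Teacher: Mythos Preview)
Your proof is correct and follows essentially the same route as the paper's: expand $\prod_i h_{v_i}[x_0/q+\hat{x}_m^{(a)}]$ and $\prod_i(1-x_0/x_i)$, identify each summand as a generalized $q$-Dyson constant term, kill the off-diagonal ones via Proposition~\ref{prop-Cai}, and finish the remaining term by an $x_0$-degree count. The only cosmetic difference is the bookkeeping: the paper splits on the sign of $\Delta=r-|S|-\sum_i k_i$ (degree argument for all $\Delta>0$, reduction to $D_{w,\mu}(a^{(0)},m-1)$ when $\Delta=0$, and $D_{w,\mu}(a,m)|_{a_0=0}$ when $\Delta<0$), whereas your uniform packaging as $D_{w(k,S),\mu_k}(a'',m)$ with $a''=(0,a_1,\dots,a_n)$ lets Proposition~\ref{prop-Cai} handle every $(k,S)\neq(0,\emptyset)$ at once and isolates only $(k,S)=(0,\emptyset)$ for the degree argument.
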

\begin{proof}
By repeated use of \eqref{e-xy} with $X\mapsto x_0/q$ and $Y\mapsto \hat{x}_m^{(a)}$ we can expand $\prod_{i=1}^nh_{v_i}[x_0/q+\hat{x}_m^{(a)}]$ as
\[
\sum_{\substack{0\leq k_i\leq v_i\\
 1\leq i\leq n}}(x_0/q)^{\sum_{i=1}^nk_i}\prod_{i=1}^nh_{v_i-k_i}[\hat{x}_m^{(a)}].
\]
Together with the expansion
\begin{equation*}
\prod_{i=1}^{n}(1-x_0/x_i)=\sum_{s=0}^n \sum_{1\leq t_1<\cdots<t_s\leq n}(-1)^s \frac{x_{0}^s}{x_{t_1}\cdots x_{t_s}},
\end{equation*}
the constant term in \eqref{e-hvanish} becomes
\begin{align}\label{sum}
\sum (-1)^sq^{z}\CT_x \frac{h_{\mu}[\hat{x}_m^{(a)}]}{x_0^{\Delta}x_1^{v_1}\cdots x_n^{v_n}x_{t_1}\cdots x_{t_s}}\prod_{i=1}^{n}(qx_i/x_0)_{a_i}
\prod_{1\leq i<j\leq n} (x_i/x_j)_{a_i}(qx_j/x_i)_{a_j},
\end{align}
where
\[
z=-\sum_{i=1}^n k_i, \ \mu=(r,v_1-k_1,\dots,v_n-k_n)^+,\ \text{and}\
\Delta=r-s-\sum_{i=1}^n k_i,
\]
and the sum is over all integers $k_i$, $t_i$ and $s$ such that $0\leq k_i\leq v_i$ for $1\leq i\leq n$, $0\leq s\leq n$ and
$1\leq t_1<\cdots<t_s\leq n$.
Let $C$ be a constant term in \eqref{sum}.
We show that every $C$ equals 0 according to the sign of $\Delta$.

If $\Delta>0$, then $C=0$ by a degree consideration in $x_0$.

If $\Delta=0$, then
\begin{align*}
C&=\CT_x \frac{h_{\mu}[\hat{x}_m^{(a)}]}{x_1^{v_1}\cdots x_n^{v_n}x_{t_1}\cdots x_{t_s}}\prod_{i=1}^{n}(qx_i/x_0)_{a_i}
\prod_{1\leq i<j\leq n} (x_i/x_j)_{a_i}(qx_j/x_i)_{a_j}.
\end{align*}
By a degree consideration in $x_0$, it reduces to
\begin{align*}
C=\CT_x \frac{h_{\mu}[\hat{x}_m^{(a)}]}{x_1^{v_1}\cdots x_n^{v_n}x_{t_1}\cdots x_{t_s}}
\prod_{1\leq i<j\leq n} (x_i/x_j)_{a_i}(qx_j/x_i)_{a_j}.
\end{align*}
Let $w=(w_1,\dots,w_n)$ be the vector such that $x_1^{w_1}\cdots x_n^{w_n}={x_1^{v_1}\cdots x_n^{v_n}x_{t_1}\cdots x_{t_s}}$.
Then $C$ can be written as
\[
D_{w,\mu}(a^{(0)},m-1).
\]
Since the largest part of $\mu$ is $r$, which exceeds
the largest part of $w$ (that is at most
$\max\{v_i\Mid i=1,\dots,n\}+1$),
$w^+\geq \mu$ can not hold. Thus, we can conclude $C=0$ by Proposition \ref{prop-Cai}.

For $\Delta<0$,
let $w=(w_0,w_1,\dots,w_n)$ be the vector such that $x^w={x_0^{\Delta}x_1^{v_1}\cdots x_n^{v_n}x_{t_1}\cdots x_{t_s}}$.
Then
\[
C=D_{w,\mu}(a,m)\Mid_{a_0=0}.
\]
Because of the same reason as that for the $\Delta=0$ case, $w^+\geq \mu$ can not hold.
Thus, $D_{w,\mu}(a,m)=0$ by Proposition \ref{prop-Cai}.
It follows that
\[
C=D_{w,\mu}(a,m)\Mid_{a_0=0}\,=0.
\]

In summary, every summand in \eqref{sum} equals 0 and so is the sum.
\end{proof}

Using Lemma \ref{lem-hvanish} and the generating function of complete symmetric functions \eqref{e-gfcomplete},
we can characterize the value of $D_{v,v^+}(a,m)$ at $a_0=1$ for $v$ a composition such that $v_0=\max\{v\}$ has multiplicity one in $v$.
\begin{lem}\label{lem-iterate}
Let $v=(v_0,\dots,v_n)$ be a composition such that $v_0$ is its unique largest part.
For $m\in \{1,2,\dots,n+1\}$
\begin{equation}
D_{v,v^+}(a,m)|_{a_0=1}=
q^{-v_0+\sum_{i=1}^{m-1} a_i}D_{v^{(0)},(v^{(0)})^+}(a^{(0)},m-1).
\end{equation}
\end{lem}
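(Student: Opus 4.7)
The plan is to evaluate $\CT_{x_0}$ explicitly, reducing the $(n+1)$-variable constant term to the $n$-variable one on the right. Substituting $a_0=1$ in \eqref{D-m} collapses the $x_0$-block of the alphabet $x_m^{(a)}$ (using $m\ge 1$) to the single letter $x_0/q$ and turns $(x_0/x_j)_{a_0}$ into $1-x_0/x_j$, so
\[
D_{v,v^+}(a,m)|_{a_0=1}=\CT_x x^{-v}\,h_{v^+}[x_0/q+Y]\prod_{j=1}^n(1-x_0/x_j)(qx_j/x_0)_{a_j}\,D',
\]
where $Y:=\hat{x}_m^{(a)}$ and $D':=\prod_{1\le i<j\le n}(x_i/x_j)_{a_i}(qx_j/x_i)_{a_j}$ is independent of $x_0$.

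Split off $h_{v_0}$ from $h_{v^+}=h_{v_0}\prod_{j=1}^nh_{v_j}$ and, via $\sum_{r\ge 0}z^rh_r[Y]=\prod_{y\in Y}(1-zy)^{-1}$ with $z=q/x_0$, write
\[
h_{v_0}[x_0/q+Y]=(x_0/q)^{v_0}\prod_{y\in Y}\frac{1}{1-qy/x_0}-\sum_{r>v_0}(x_0/q)^{v_0-r}h_r[Y].
\]
Since $v_0>v_i$ for every $i\ge 1$, every $r>v_0$ satisfies $r>\max\{v_i:1\le i\le n\}+1$, and each term of the tail sum vanishes under $\CT_x$ by Lemma~\ref{lem-hvanish}. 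Effectively $h_{v_0}[x_0/q+Y]$ may be replaced by $(x_0/q)^{v_0}\prod_{y\in Y}(1-qy/x_0)^{-1}$, pulling out an overall $q^{-v_0}$ after multiplying by $x_0^{-v_0}$.

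Next, simplify the surviving rational part. Writing $Y$ as an explicit union of $q$-geometric progressions and splitting letters by $\chi(i<m)$ yields
\[
\prod_{y\in Y}\frac{1}{1-qy/x_0}=\prod_{i=1}^{m-1}\frac{1}{(x_i/x_0)_{a_i}}\prod_{i=m}^{n}\frac{1}{(qx_i/x_0)_{a_i}}.
\]
Combining with $\prod_{j=1}^n(1-x_0/x_j)(qx_j/x_0)_{a_j}$ and using $(qx_j/x_0)_{a_j}/(x_j/x_0)_{a_j}=(1-q^{a_j}x_j/x_0)/(1-x_j/x_0)$ together with $(1-x_0/x_j)/(1-x_j/x_0)=-x_0/x_j$, the rational chunk telescopes into the polynomial $\prod_{j=1}^{m-1}(q^{a_j}-x_0/x_j)\prod_{j=m}^n(1-x_0/x_j)$.

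What now sits inside the constant term is a genuine polynomial in $x_0$---the above polynomial times $\prod_{j=1}^n h_{v_j}[x_0/q+Y]$---multiplied by the $x_0$-free factor $\prod_j x_j^{-v_j}\,D'$ and the prefactor $q^{-v_0}$. Taking $\CT_{x_0}$ is therefore just evaluation at $x_0=0$, giving $q^{\sum_{i=1}^{m-1}a_i}\prod_{j=1}^n h_{v_j}[Y]$, and hence
\[
D_{v,v^+}(a,m)|_{a_0=1}=q^{-v_0+\sum_{i=1}^{m-1}a_i}\CT_{x_1,\dots,x_n}\prod_{j=1}^n x_j^{-v_j}\prod_{j=1}^n h_{v_j}[Y]\,D'.
\]
The right-hand side is exactly $q^{-v_0+\sum_{i=1}^{m-1}a_i}D_{v^{(0)},(v^{(0)})^+}(a^{(0)},m-1)$ because $h_{(v^{(0)})^+}=\prod_{j=1}^n h_{v_j}$ and $Y=\hat{x}_m^{(a)}$ coincides with $x_{m-1}^{(a^{(0)})}$ under renumbering. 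The only delicate step is the rewrite in the second paragraph: enlarging the finite polynomial expansion of $h_{v_0}$ into an infinite series is legitimate only because Lemma~\ref{lem-hvanish} cancels the overshoot; once that is accepted, the remainder of the argument is algebraic bookkeeping.
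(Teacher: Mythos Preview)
Your proof is correct and follows essentially the same route as the paper's own argument: set $a_0=1$, expand $h_{v_0}[x_0/q+\hat{x}_m^{(a)}]$ via \eqref{e-xy}, invoke Lemma~\ref{lem-hvanish} to justify replacing the truncated sum by the full generating-function product, cancel against $\prod_j(qx_j/x_0)_{a_j}$ to obtain a polynomial in $x_0$, and read off $\CT_{x_0}$ as evaluation at $x_0=0$. The only cosmetic difference is that you write $h_{v_0}[x_0/q+Y]$ as ``product minus tail'' whereas the paper phrases it as ``finite sum extended to an infinite one''; these are equivalent reformulations of the same generating-function trick.
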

\begin{proof}
By the definition of $D_{v,v^+}(a,m)$, for $m\in \{1,2,\dots,n+1\}$
\begin{align*}
D_{v,v^+}(a,m)|_{a_0=1}
=\CT_x  \frac{h_{v^+}\big[x_0/q+\hat{x}_m^{(a)}\big]}{x^{v}}
\prod_{i=1}^{n}(1-x_0/x_i)(qx_i/x_0)_{a_i}D_{n}(a^{(0)}),
\end{align*}
where $\hat{x}_m^{(a)}:=\sum_{i=1}^{n}x_i(1-q^{a_i})q^{-\chi(i<m)}/(1-q)$
and $D_{n}(a^{(0)}):=\prod_{1\leq i<j\leq n} (x_i/x_j)_{a_i}(qx_j/x_i)_{a_j}$.
By \eqref{e-xy} with $X\mapsto x_0/q$ and $Y\mapsto \hat{x}_m^{(a)}$, expand $h_{v_0}[x_0/q+\hat{x}_m^{(a)}]$ as
\begin{equation*}
\sum_{r=0}^{v_0}(x_0/q)^{v_0-r}h_{r}[\hat{x}_m^{(a)}].
\end{equation*}
Then
\begin{align*}
D_{v,v^+}(a,m)|_{a_0=1}
=q^{-v_0}\sum_{r=0}^{v_0}\CT_x
\frac{h_{r}[\hat{x}_m^{(a)}]h_{(v^{(0)})^+}[x_0/q+\hat{x}_m^{(a)}]}{(x_0/q)^rx_1^{v_1}\cdots x_n^{v_n}} \prod_{i=1}^{n}(1-x_0/x_i)(qx_i/x_0)_{a_i}D_{n}(a^{(0)}).
\end{align*}
If $r>v_0$, then $r>\max\{v_i\Mid i=1,\dots,n\}+1$
because $v_0$ is the unique largest part of $v$.
Hence, by Lemma \ref{lem-hvanish}
the constant term in the above sum equals 0 if $r>v_0$.
It follows that $D_{v,v^+}(a,m)|_{a_0=1}$ can be written as
\begin{align*}
q^{-v_0}\CT_x \sum_{r=0}^{\infty}
\frac{h_{r}[\hat{x}_m^{(a)}]h_{(v^{(0)})^+}[x_0/q+\hat{x}_m^{(a)}]}
{(x_0/q)^rx_1^{v_1}\cdots x_n^{v_n}}
\prod_{i=1}^{n}(1-x_0/x_i)(qx_i/x_0)_{a_i}D_{n}(a^{(0)}).
\end{align*}
By the generating function of complete symmetric functions \eqref{e-gfcomplete} this becomes
\begin{align*}
q^{-v_0}\CT_x \frac{h_{(v^{(0)})^+}[x_0/q+\hat{x}_m^{(a)}]\prod_{i=1}^{n}(1-x_0/x_i)(qx_i/x_0)_{a_i}
D_{n}(a^{(0)})}{\prod_{i=1}^{m-1}(x_i/x_0)_{a_i}
\prod_{i=m}^n(qx_i/x_0)_{a_i}x_1^{v_1}\cdots x_n^{v_n}}.
\end{align*}
Cancelling the same factors yields
\begin{multline*}
q^{-v_0}\CT_x \frac{h_{(v^{(0)})^+}[x_0/q+\hat{x}_m^{(a)}]}{x_1^{v_1}\cdots x_n^{v_n}}\prod_{i=1}^{m-1}\frac{(1-x_0/x_i)(1-q^{a_i}x_i/x_0)}{1-x_i/x_0}
\prod_{i=m}^n(1-x_0/x_i)D_{n}(a^{(0)}) \\
=q^{-v_0}\CT_x \frac{h_{(v^{(0)})^+}[x_0/q+\hat{x}_m^{(a)}]}{x_1^{v_1}\cdots x_n^{v_n}}\prod_{i=1}^{m-1}(q^{a_i}-x_0/x_i)
\prod_{i=m}^n(1-x_0/x_i)D_{n}(a^{(0)}).
\end{multline*}
By a degree consideration in $x_0$, it further reduces to
\[
q^{-v_0+\sum_{i=1}^{m-1}a_i}\CT_x \frac{h_{(v^{(0)})^+}[\hat{x}_m^{(a)}]}
{x_1^{v_1}\cdots x_n^{v_n}}D_{n}(a^{(0)}),
\]
which can be written as
\[
q^{-v_0+\sum_{i=1}^{m-1}a_i}D_{v^{(0)},(v^{(0)})^+}(a^{(0)},m-1).
\qedhere
\]
\end{proof}
Now we obtain all the ingredients for characterizing  $D_{v,v^+}(a,m)$ if $v$ is a composition such that $v_0=\max\{v\}$ has multiplicity one in $v$,
and $m\in \{1,2,\dots,n+1\}$.
By Proposition \ref{cor-poly}, $D_{v,v^+}(a,m)$ is a polynomial in $q^{a_0}$ of degree at most
$a_1+\cdots+a_n+v_0$ for fixed non-negative integers $a_1,\dots,a_n$.
By Lemma \ref{lem-iterate}
\[
D_{v,v^+}(a,m)|_{a_0=1}
=q^{-v_0+\sum_{i=1}^{m-1} a_i} D_{v^{(0)},(v^{(0)})^+}(a^{(0)},m-1).
\]
By Lemma \ref{lem-vanishing}
\[
D_{v,v^+}(a,m)=0
\]
for $-a_0\in \{0,1,\dots,a_1+\cdots+a_n+v_0-1\}$.
Hence, the above properties uniquely determine $D_{v,v^+}(a,m)$ as
\begin{equation}
D_{v,v^+}(a,m)=q^{-v_0+\sum_{i=1}^{m-1} a_i}\qbinom{v_0+|a|-1}{a_0-1}
D_{v^{(0)},(v^{(0)})^+}(a^{(0)},m-1).
\end{equation}
This completes the proof of Theorem \ref{thm-2}.

\end{document}